\newtheorem{theorem}{Theorem}[section]
\newtheorem{lemma}[theorem]{Lemma}
\newtheorem{corollary}[theorem]{Corollary}
\theoremstyle{definition}
\theoremstyle{remark}
\newtheorem{remark}[theorem]{Remark}
\numberwithin{equation}{section}
\begin{document}
\title[Some Berezin number inequalities for  operator matrices]{Some Berezin number inequalities for  operator matrices}
\author[M. Bakherad  ]{Mojtaba Bakherad}
\address{Department of Mathematics, Faculty of Mathematics, University
of Sistan and Baluchestan, Zahedan, I.R.Iran.}
\email{mojtaba.bakherad@gmail.com; bakherad@member.ams.org}
\subjclass[2010]{Primary: 47A30, Secondary: 15A60, 30E20, 47A12, 47B15,
47B20.}
\keywords{Reproducing kernel, Berezin number, Numerical radius, Operator matrix.}

\begin{abstract}
The Berezin symbol $\widetilde{A}$ of an operator $A$ acting on the reproducing
kernel Hilbert space ${\mathscr H}={\mathscr H(}\Omega)$ over some (non-empty) set is defined by
$\widetilde{A}(\lambda)=\langle A\hat{k}_{\lambda},\hat{k}_{\lambda}\rangle\,\,\,(\lambda\in\Omega)$,
where $\hat{k}_{\lambda}=\frac{{k}_{\lambda}}{\|{k}_{\lambda}\|}$
is the normalized reproducing kernel of ${\mathscr H}$. The  Berezin
number of operator $A$ is defined by
 $\mathbf{ber}(A)=\underset{\lambda \in
\Omega
}{\sup }\big|\tilde{A}(\lambda)\big|=\underset{\lambda \in
\Omega
}{\sup }\big|\langle A\hat{k}_{\lambda},\hat{k}_{\lambda}\rangle\big|$.
Moreover   $\mathbf{ber}(A)\leqslant w(A)$ (numerical radius).
In this paper, we present some Berezin number inequalities. Among other inequalities, it is shown that if
 $\mathbf{T}=\left[\begin{array}{cc}
 A&B\\
 C&D
 \end{array}\right]\in {\mathbb B}({\mathscr H(\Omega_1)}\oplus{\mathscr H(\Omega_2)})$, then
 \begin{align*}
 \mathbf{ber}(\mathbf{T}) \leqslant\frac{1}{2}\left( \mathbf{ber}(A)+ \mathbf{ber}(D)\right)+\frac{1}{2}\sqrt{\left( \mathbf{ber}(A)- \mathbf{ber}(D)\right)^2+(\|B\|+\|C\|)^2}.
 \end{align*}

\end{abstract}

\maketitle



\section{Introduction}
Let ${\mathscr H}$ be a complex
Hilbert space and ${\mathbb{B}}(\mathscr H)$ denote the $C^{\ast }$-algebra
of all bounded linear operators on ${\mathscr H}$ with the identity $I$. In
the case when {dim}${\mathscr H}=n$, we identify ${\mathbb{B}}({\mathscr H})$
with the matrix algebra $\mathbb{M}_{n}$ of all $n\times n$ matrices having
entries in the complex field. An operator $A\in{\mathbb B}({\mathscr H})$ is called positive
if $\langle Ax,x\rangle\geqslant0$ for all $x\in{\mathscr H }$, and then we write $A\geqslant0$. The numerical range and numerical radius of $A\in {\mathbb{B}}({%
\mathscr H})$ are defined by
\begin{equation*}
W(A):= \big\{\langle Ax,x\rangle :x\in {\mathscr H},\Vert
x\Vert =1\big\}\quad \textrm{and} \quad w(A):=\sup \big\{|\lambda |:\lambda\in  W(A)\big\},
\end{equation*}%
respectively. It is well known that $w(\,\cdot \,)$ defines a norm on ${\mathbb{B}}({%
\mathscr H})$, which is equivalent to the usual operator norm $\Vert\cdot
\Vert $. In fact, for any $A\in {\mathbb{B}}({\mathscr H})$, $\frac{1}{2}%
\Vert A\Vert \leqslant w(A)\leqslant \Vert A\Vert $ (see \cite[p. 9]{gof}). \\
A functional Hilbert space is a Hilbert space ${\mathscr H}={\mathscr H}(\Omega)$ of complex-valued
functions on a (non-empty) set $\Omega$, which has the property that point evaluations are continuous
i.e., for each $\lambda\in\Omega$ the map $f\longrightarrow f(\lambda)$ is a continuous linear functional on ${\mathscr H}$. Then the Riesz
representation theorem ensures that for each $\lambda\in\Omega$ there is a unique element $k_\lambda$ of ${\mathscr H}$ such that
$f(\lambda)=\langle f,k_\lambda\rangle$ for all $f\in{\mathscr H}$. The collection $\{k_\lambda:\lambda\in\Omega\}$ is called the reproducing kernel of ${\mathscr H}$. If $\{e_n\}$ is an orthonormal basis for a
functional Hilbert space ${\mathscr H}$, then the reproducing kernel of ${\mathscr H}$ is given by \cite[Problem 37]{hal}
\begin{equation*}
k_\lambda(z)=\sum_n\overline{e_n(\lambda)}e_n(z).
\end{equation*}%
For $\lambda\in\Omega$, let $\hat{k}_{\lambda}=\frac{{k}_{\lambda}}{\|{k}_{\lambda}\|}$ be the normalized reproducing kernel of ${\mathscr H}$. For a bounded linear operator $A$ on ${\mathscr H}$, the function $\widetilde{A}$ defined on $\Omega$ by
$\widetilde{A}(\lambda)=\langle A\hat{k}_{\lambda},\hat{k}_{\lambda}\rangle$
is the Berezin symbol of $A$, which firstly have been introduced by Berezin \cite{ber1, ber2}.  Berezin set and Berezin number of operator $A$ are defined by (see \cite{kar})
\begin{equation*}
 \mathbf{Ber}(A):=\big\{\widetilde{A}(\lambda):\lambda\in\Omega\big\}\quad\textrm{and} \quad  \mathbf{ber}(A):=\sup\big\{|\widetilde{A}(\lambda)|:\lambda\in \Omega\big\},
\end{equation*}%
respectively. It is clear that
the Berezin symbol $\widetilde{A}$  is the bounded function on $\Omega$ whose values lies in the numerical range of
the operator $A$ and hence%
\begin{align*}
 \mathbf{Ber}(A)\subseteq W(A)\qquad\textrm{and} \qquad  \mathbf{ber}(A)\leqslant w(A)
\end{align*}%
for all $A\in{\mathbb{B}}(\mathscr H)$. Karaev \cite{kar4} showed that if ${\mathscr H}^2$ is the Hardy space, then we take  $A = \langle\,\cdot\,,z\rangle z$ in ${\mathscr H}^2$, an elementary calculation
shows that $\widetilde{A}(\lambda)={|\lambda|^2}{(1-|\lambda|^2)}$, and thus
\begin{align*}
 \mathbf{Ber}(A) = \left[0, \frac{1}{4}\right]\varsubsetneq[0, 1] = W(A)\qquad\textrm{and} \qquad  \mathbf{ber}(A) = \frac{1}{4} \lneq 1 = w(A).
\end{align*}%

Moreover, Berezin number of an operator $A$  satisfies the following properties:\\
$(a)\,\,\mathbf{ber}(\alpha A)=|\alpha| \mathbf{ber}(A)$ for all $\alpha\in \mathcal{C}.$
\\
$(b)\,\, \mathbf{ber}(A+B)\leqslant  \mathbf{ber}(A)+ \mathbf{ber}(B).$
\\

Let $T_i\in{\mathbb{B}}(\mathscr H(\Omega))\,\,(1\leqslant i\leqslant n)$. Then we define the generalized Euclidean Berezin number of $T_{1},\cdots,T_{n}$ as follows
\begin{align*}
\mathbf{ber}_{p}(T_{1},\ldots,T_{n}):=\underset{\lambda\in\Omega
}{\sup }  \left(\sum_{i=1}^{n} \left| \left\langle T_{i}\hat{k}_{\lambda}, \hat{k}_{\lambda}\right\rangle \right|^{p}\right)^{\frac{1}{p}}.
\end{align*}
The generalized Euclidean Berezin number $\mathbf{ber}_{p}\,\,(p\geqslant1)$  has the following properties:\\
$(a)\,\,\mathbf{ber}_{p}(\alpha T_{1},\ldots,\alpha T_{n})=|\alpha|\,\mathbf{ber}_{p}(T_{1},\ldots,T_{n})$ for all $\alpha\in\mathcal{C}$;
\\
$(b)\,\,\mathbf{ber}_{p}(T_{1}+S_1,\ldots,T_{n}+S_n)\leqslant\mathbf{ber}_{p}(T_{1},\ldots,T_{n})+\mathbf{ber}_{p}(S_{1},\ldots,S_{n})$,
\\
where $T_i, S_i\in{\mathbb{B}}(\mathscr H(\Omega))\,\,(1\leqslant i\leqslant n)$.\\
 Namely, the Berezin symbol have been investigated in detail for the Toeplitz and Hankel operators
on the Hardy and Bergman spaces; it is widely applied in the various questions of analysis and uniquely determines
the operator (i.e., $\widetilde{A}(\lambda)=\widetilde{B}(\lambda)$ for all $\lambda\in\Omega$ implies $A = B$). For further information about Berezin symbol we refer the reader to \cite{kar2, kar3, kar4, nor, zhu} and references therein.\\
Let ${\mathscr H_{1}},{\mathscr H_{2}}, \cdots,{\mathscr H_{n}} $ be Hilbert spaces, and consider the direct sum ${\mathscr H}=\bigoplus_{j=1}^{n}{\mathscr H_{j}}$. With respect to this decomposition, every operator $T\in {\mathbb B}({\mathscr H})$ has an $n\times n$ operator matrix representation $T=[T_{ij}]$ with entries  $T_{ij}\in {\mathbb B}({\mathscr H_{j}}, {\mathscr H_{i}})$, the space of all bounded linear operators from ${\mathscr H_{j}}$ to ${\mathscr H_{i}}$.
Let $A\in {\mathbb B}({\mathscr H_{1}}, {\mathscr H_{1}})$, $B\in {\mathbb B}({\mathscr H_{2}}, {\mathscr H_{1}})$, $C\in {\mathbb B}({\mathscr H_{1}}, {\mathscr H_{2}})$ and $D\in {\mathbb B}({\mathscr H_{2}}, {\mathscr H_{2}})$. The operator matrix $\left[\begin{array}{cc} A&0\\ 0&D \end{array}\right]$ is called the diagonal part of $\left[\begin{array}{cc} A&B\\ C&D \end{array}\right]$ and $\left[\begin{array}{cc}  0&B\\ C&0 \end{array}\right]$ is the off-diagonal part.
Operator matrices provide a usual tool for studying Hilbert space operators, which have been extensively studied in the literatures.
J.C. Hou et al. \cite{hou} and  A. Omer et al. \cite{aA} established useful estimates for the spectral radius, the numerical radius, and the operator norm of an $n\times n$ operator matrix $\mathbf{T}=\left[T_{ij}\right]$. In particular, they proved that%
$$r(\mathbf{T})\leqslant r\left(\left[\|T_{ij}\|\right]\right),\quad w(\mathbf{T})\leqslant w\left(\left[\|T_{ij}\|\right]\right), \quad\|\mathbf{T}\|\leqslant \left\|\left[\|T_{ij}\|\right]\right\|$$%
{and}
$w(\mathbf{T})\leqslant w\left([t_{ij}]\right)$,
where $t_{ij}=\left\{\begin{array}{cc}
  w(T_{ij})&\textrm{if}\,\,\,\, i=j,\\
 \|T_{ij}\|&\textrm{if}\,\,\, i\not=j.
 \end{array}\right.$

The Berezin number is named in honor of F. Berezin, who introduced this concept in \cite{ber1}.
In this paper, we establish some inequalities involving the Berezin number of operators. By using the some ideas of \cite{aA, sheikh} we show several upper bounds for the Berezin number and the generalized Euclidean Berezin number  of  Hilbert space operators.

\section{main results}
Now we are in a position to present our first result.
\begin{theorem}\label{main1}
\label{kit} Let $\mathbf{T}=\left[T_{ij}\right]$ be $n\times n$ operator matrix with $T_{ij}\in {\mathbb B}({\mathscr H(\Omega_j)},{\mathscr H(\Omega_i)})\,\,(1\leqslant i,j\leqslant n)$. Then

\begin{equation*}
 \mathbf{ber}(\mathbf{T})\leqslant w\left([t_{ij}]\right),
\end{equation*}%
where $t_{ij}=\left\{\begin{array}{cc}
  \mathbf{ber}(T_{ij})&\textrm{if}\,\,\,\, i=j,\\
 \|T_{ij}\|&\textrm{if}\,\,\, i\not=j.
 \end{array}\right.$
\end{theorem}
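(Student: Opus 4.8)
The plan is to estimate $|\langle \mathbf{T}\hat k_\lambda, \hat k_\lambda\rangle|$ directly, where now the reproducing kernel Hilbert space is the direct sum $\mathscr H(\Omega_1)\oplus\cdots\oplus\mathscr H(\Omega_n)$. First I would fix $\lambda$ lying in one of the component sets — say $\lambda\in\Omega_k$ — so that the normalized reproducing kernel of the direct sum at that point is $\hat k_\lambda = (0,\ldots,0,\hat k_\lambda^{(k)},0,\ldots,0)$, with the nonzero entry in slot $k$. Writing $\mathbf T = [T_{ij}]$ and expanding the inner product, only the $k$-th block column survives, and taking absolute values gives
\begin{align*}
|\langle \mathbf T\hat k_\lambda,\hat k_\lambda\rangle| = \Big|\big\langle T_{kk}\hat k_\lambda^{(k)},\hat k_\lambda^{(k)}\big\rangle\Big| = |\widetilde{T_{kk}}(\lambda)| \leqslant \mathbf{ber}(T_{kk}).
\end{align*}
So for a kernel supported on a single component the bound is immediate. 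The subtlety is that the reproducing kernel Hilbert space $\mathscr H(\Omega)$ on the disjoint union $\Omega = \Omega_1\sqcup\cdots\sqcup\Omega_n$ has $\{k_\lambda : \lambda\in\Omega\}$ precisely the set of such "one-block-supported" kernels (the point evaluation at $\lambda\in\Omega_k$ only sees the $k$-th coordinate function), so in fact every normalized reproducing kernel of the direct sum space has this form. Hence $\mathbf{ber}(\mathbf T) = \sup_k \mathbf{ber}(T_{kk})$, which is trivially $\leqslant w([t_{ij}])$ since the diagonal matrix $\mathrm{diag}(t_{11},\ldots,t_{nn})$ is a compression, or more plainly $\max_k t_{kk} \leqslant w([t_{ij}])$.

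If instead one wants the statement to hold without assuming the kernel is block-supported — i.e. allowing a general unit vector decomposition $\hat k_\lambda = (x_1,\ldots,x_n)$ with $\sum\|x_i\|^2 = 1$ — then the argument follows the Hou–Omer template for the numerical radius. Set $u_j = x_j/\|x_j\|$ (when $x_j\neq 0$) and $\xi = (\|x_1\|,\ldots,\|x_n\|)\in\mathbb C^n$ a unit vector. Then
\begin{align*}
|\langle \mathbf T\hat k_\lambda,\hat k_\lambda\rangle| \leqslant \sum_{i,j} \|x_i\|\,\|x_j\|\,|\langle T_{ij}u_j,u_i\rangle| \leqslant \sum_{i,j} \xi_i\,\xi_j\, t_{ij} = \langle [t_{ij}]\,\xi,\xi\rangle \leqslant w([t_{ij}]),
\end{align*}
using $|\langle T_{ii}u_i,u_i\rangle|\leqslant\mathbf{ber}(T_{ii})$ for the diagonal terms (here $u_i$ ranges over normalized reproducing kernels of $\mathscr H(\Omega_i)$) and $|\langle T_{ij}u_j,u_i\rangle|\leqslant\|T_{ij}\|$ off the diagonal, then observing that $[t_{ij}]$ has nonnegative entries so its quadratic form over the unit sphere of $\mathbb C^n$ is dominated by its numerical radius. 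Taking the supremum over $\lambda$ finishes the proof.

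The one genuine point requiring care — and the step I expect to be the main obstacle to write cleanly — is the passage to the diagonal: bounding $|\langle T_{ii}u_i,u_i\rangle|$ by $\mathbf{ber}(T_{ii})$ rather than merely by $w(T_{ii})$. This is legitimate only because, in the reproducing kernel setting, the vectors $u_i$ arising from a reproducing kernel of the direct sum are themselves (normalized) reproducing kernels of $\mathscr H(\Omega_i)$ — which is exactly why the block-supported structure of $k_\lambda$ on a disjoint union is the clean way to organize the argument. I would therefore lead with the disjoint-union description of $\mathscr H(\Omega)$ and the observation that its reproducing kernels are block-supported, derive $\mathbf{ber}(\mathbf T)=\max_k\mathbf{ber}(T_{kk})$, and then note that this is bounded by $w([t_{ij}])$; the general-decomposition computation above can be kept as a remark showing the inequality is robust.
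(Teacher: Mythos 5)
Your second ("general decomposition") computation is, in substance, exactly the paper's proof: expand $\langle \mathbf T\hat k,\hat k\rangle$, bound each diagonal term by $\mathbf{ber}(T_{ii})\,\|k_{\lambda_i}\|^{2}$ and each off-diagonal term by $\|T_{ij}\|\,\|k_{\lambda_j}\|\,\|k_{\lambda_i}\|$, and then observe that $\sum_{i,j}t_{ij}\xi_j\xi_i=\langle [t_{ij}]\xi,\xi\rangle\leqslant w([t_{ij}])$ for the nonnegative unit vector $\xi=(\|k_{\lambda_1}\|,\dots,\|k_{\lambda_n}\|)$. You also correctly isolate the one delicate point: the diagonal terms may be estimated by $\mathbf{ber}(T_{ii})$ rather than $w(T_{ii})$ only because the $i$-th component of the kernel is itself a (scalar multiple of a) reproducing kernel of ${\mathscr H}(\Omega_i)$.

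Where your proposal goes astray is in the argument you want to \emph{lead} with. The paper does not view ${\mathscr H}(\Omega_1)\oplus\cdots\oplus{\mathscr H}(\Omega_n)$ as a reproducing kernel space over the disjoint union; it takes the Berezin number of $\mathbf T$ over the tuples $\hat k_{(\lambda_1,\dots,\lambda_n)}=(k_{\lambda_1},\dots,k_{\lambda_n})$ indexed by the product $\Omega_1\times\cdots\times\Omega_n$ and normalized by $\sum_i\|k_{\lambda_i}\|^{2}=1$, so every component is nonzero and nothing is block-supported. Under your disjoint-union reading you would conclude $\mathbf{ber}(\mathbf T)=\sup_k\mathbf{ber}(T_{kk})$, which makes the off-diagonal blocks invisible; that is not the quantity the paper is estimating, and it would render the rest of the paper vacuous or trivially weak (the off-diagonal part would have Berezin number $0$, the two-by-two corollary would be weaker than the plain maximum bound, and the off-diagonal theorem later in the paper would say nothing). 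So the identity $\mathbf{ber}(\mathbf T)=\max_k\mathbf{ber}(T_{kk})$ should not be the backbone of the argument: lead instead with the tuple-kernel convention, and let your second computation (which coincides with the paper's proof, with the supremum over $(\lambda_1,\dots,\lambda_n)$ taken at the end) carry the proof.
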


\begin{proof}
Let ${\mathscr H}=\oplus_{i=1}^n{\mathscr H(\Omega_i)}$.
For every $(\lambda_1,\ldots,\lambda_n)\in\Omega_1\times\cdots\times\Omega_n$, let $\mathbf{\hat{k}_{(\lambda_1,\ldots,\lambda_n)}}=\left[\begin{array}{cc}
 k_{\lambda_1}\\
 \vdots\\
 k_{\lambda_n}
 \end{array}\right]$ be the normalized reproducing kernel of ${\mathscr H}$. Then
 \begin{align*}
 \left|\widetilde{\mathbf{T}}(\lambda_1,\cdots,\lambda_n)\right|&=
 \left|\left\langle\mathbf{T}\mathbf{\hat{k}_{(\lambda_1,\cdots,\lambda_n)}},\mathbf{\hat{k}_{(\lambda_1,\cdots,\lambda_n)}}\right\rangle\right|
 \\&=\left|\sum_{i,j=1}^n\left\langle T_{ij}k_{\lambda_j},k_{\lambda_i}\right\rangle\right|
 \\&\leqslant\sum_{i,j=1}^n\left|\left\langle T_{ij}k_{\lambda_j},k_{\lambda_i}\right\rangle\right|
 \\&=\sum_{i=1}^n\left|\left\langle T_{ii}k_{\lambda_i},k_{\lambda_i}\right\rangle\right|+\sum_{i,j=1\atop{i\neq j}}^n\left|\left\langle T_{ij}k_{\lambda_j},k_{\lambda_i}\right\rangle\right|
 \\&\leqslant\sum_{i=1}^n \mathbf{ber}(T_{ii})\|k_{\lambda_i}\|^2+\sum_{i,j=1\atop{i\neq j}}^n\|T_{ij}\|\|k_{\lambda_j}\|\|k_{\lambda_i}\|
 \\&=\sum_{i,j=1}^nt_{ij}\|k_{\lambda_j}\|\|k_{\lambda_i}\|
 \\&=\left\langle[t_{ij}]y,y\right\rangle,
 \end{align*}
 where $y=\left[\begin{array}{cc}
 \|k_{\lambda_1}\|\\
 \vdots\\
 \|k_{\lambda_n}\|
 \end{array}\right]$. It follows from $\|y\|=1$, that $\left|\widetilde{\mathbf{T}}(\lambda_1,\ldots,\lambda_n)\right|\leqslant w([t_{ij}])$.
 Hence
  \begin{align*}
 \mathbf{ber}(\mathbf{T})=\underset{(\lambda_1,\ldots,\lambda_n) \in
\Omega_1\times\cdots\times\Omega_n
}{\sup}
 \left|\widetilde{\mathbf{T}}(\lambda_1,\ldots,\lambda_n)\right|\leqslant w([t_{ij}])
  \end{align*}
 as required.
\end{proof}
\begin{corollary}\label{corcor}
 If $\mathbf{T}=\left[\begin{array}{cc}
 A&B\\
 C&D
 \end{array}\right]\in {\mathbb B}({\mathscr H(\Omega_1)}\oplus{\mathscr H(\Omega_2)})$, then
 \begin{align*}
 \mathbf{ber}(\mathbf{T}) \leqslant\frac{1}{2}\left( \mathbf{ber}(A)+ \mathbf{ber}(D)\right)+\frac{1}{2}\sqrt{\left( \mathbf{ber}(A)- \mathbf{ber}(D)\right)^2+(\|B\|+\|C\|)^2}.
 \end{align*}
In particular, for $\mathbf{T}=\left[\begin{array}{cc}
 A&0\\
 0&D
 \end{array}\right]\in {\mathbb B}({\mathscr H(\Omega_1)}\oplus{\mathscr H(\Omega_2)})$ we have
\begin{align}\label{max}
  \mathbf{ber}(\mathbf{T}) \leqslant\max\left\{ \mathbf{ber}(A), \mathbf{ber}(D)\right\}.
 \end{align}
\end{corollary}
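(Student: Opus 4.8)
The plan is to invoke Theorem~\ref{main1} and then reduce the problem to estimating the numerical radius of a $2\times2$ matrix with nonnegative real entries. Applying Theorem~\ref{main1} with $n=2$, $T_{11}=A$, $T_{12}=B$, $T_{21}=C$ and $T_{22}=D$ gives
\[
\mathbf{ber}(\mathbf{T})\leqslant w(M),\qquad M=\begin{bmatrix} \mathbf{ber}(A) & \|B\|\\ \|C\| & \mathbf{ber}(D)\end{bmatrix},
\]
so it suffices to bound $w(M)$ by the right-hand side of the asserted inequality.

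To do this, I would fix a unit column vector $u\in\mathbb{C}^{2}$ with entries $x,y$ and expand
\[
\bigl|\langle Mu,u\rangle\bigr|=\bigl|\,\mathbf{ber}(A)|x|^{2}+\|B\|\,y\overline{x}+\|C\|\,x\overline{y}+\mathbf{ber}(D)|y|^{2}\,\bigr|\leqslant \mathbf{ber}(A)|x|^{2}+\mathbf{ber}(D)|y|^{2}+\bigl(\|B\|+\|C\|\bigr)|x|\,|y|,
\]
using the triangle inequality together with $|y\overline{x}|=|x\overline{y}|=|x|\,|y|$ and the fact that $\mathbf{ber}(A),\mathbf{ber}(D)\geqslant0$. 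With $s=|x|$ and $r=|y|$, the last expression is the value of the real quadratic form associated with the real symmetric matrix
\[
N=\begin{bmatrix}\mathbf{ber}(A) & \tfrac12\bigl(\|B\|+\|C\|\bigr)\\ \tfrac12\bigl(\|B\|+\|C\|\bigr) & \mathbf{ber}(D)\end{bmatrix}
\]
on the real unit vector $(s,r)$; since $s^{2}+r^{2}=\|u\|^{2}=1$, it is therefore at most the largest eigenvalue $\lambda_{\max}(N)$.

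A direct computation from the characteristic polynomial of $N$ gives $\lambda_{\max}(N)=\frac12\bigl(\mathbf{ber}(A)+\mathbf{ber}(D)\bigr)+\frac12\sqrt{\bigl(\mathbf{ber}(A)-\mathbf{ber}(D)\bigr)^{2}+\bigl(\|B\|+\|C\|\bigr)^{2}}$. Taking the supremum over all unit vectors $u$ bounds $w(M)$ by this quantity, and combining with the first display finishes the proof of the main inequality. For the diagonal case $B=C=0$ the square root collapses to $\bigl|\mathbf{ber}(A)-\mathbf{ber}(D)\bigr|$, and since $\frac12(a+d)+\frac12|a-d|=\max\{a,d\}$ for all real $a,d$, this yields \eqref{max}. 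The only step requiring a little care is the passage from the complex quadratic form $\langle Mu,u\rangle$ to the real quadratic form in $|x|$ and $|y|$ — precisely the place where the nonnegativity of the entries of $M$ is used — after which everything reduces to a routine $2\times2$ eigenvalue calculation.
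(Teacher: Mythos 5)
Your argument is correct and follows essentially the same route as the paper: Theorem \ref{main1} with $n=2$ reduces the claim to bounding the numerical radius of the $2\times 2$ nonnegative matrix $M$ with diagonal entries $\mathbf{ber}(A),\mathbf{ber}(D)$ and off-diagonal entries $\|B\|,\|C\|$, and the resulting bound is the largest eigenvalue of its symmetrization, exactly as in the paper's proof. The only divergence is that the paper invokes Horn and Johnson \cite[p.~44]{horn} for the identification of $w(M)$ with that eigenvalue, whereas you prove the needed one-sided estimate directly (triangle inequality, passage to the real quadratic form in $|x|,|y|$, Rayleigh-quotient bound, explicit $2\times2$ eigenvalue computation); this is sound, suffices for the corollary, in fact renders the step more transparent than the paper's intermediate display (where the off-diagonal entries $\|B\|+\|C\|$ should appear halved), and your handling of the diagonal case via $\tfrac12(a+d)+\tfrac12|a-d|=\max\{a,d\}$ coincides with the paper's.
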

\begin{proof}
Using Theorem \ref{main1} we get the inequality
\begin{align*}
 \mathbf{ber}\left(\left[\begin{array}{cc}
 A&B\\
 C&D
 \end{array}\right]\right)&\leqslant w\left(\left[\begin{array}{cc}
 \mathbf{ber}A&\|B\|\\
 \|C\|&\mathbf{ber}D
 \end{array}\right]\right)\\&= \frac{1}{2}r\left(\left[\begin{array}{cc}
 \mathbf{ber}A&\|B\|+\|C\|\\
 \|B\|+\|C\|&\mathbf{ber}D
 \end{array}\right]\right)\qquad(\textrm{by\, \cite[p. 44]{horn}})\\&= \frac{1}{2}\left( \mathbf{ber}(A)+ \mathbf{ber}(D)\right)+\frac{1}{2}\sqrt{\left( \mathbf{ber}(A)- \mathbf{ber}(D)\right)^2+(\|B\|+\|C\|)^2}.
 \end{align*}
 In particular, if $\mathbf{T}=\left[\begin{array}{cc}
 A&0\\
 0&D
 \end{array}\right]\in {\mathbb B}({\mathscr H(\Omega_1)}\oplus{\mathscr H(\Omega_2)})$, then
 \begin{align*}
 \mathbf{ber}(\mathbf{T}) &\leqslant\frac{ \mathbf{ber}(A)+ \mathbf{ber}(D)+\sqrt{\left( \mathbf{ber}(A)- \mathbf{ber}(D)\right)^2}}{2}\\&=\frac{ \mathbf{ber}(A)+ \mathbf{ber}(D)+\left| \mathbf{ber}(A)- \mathbf{ber}(D)\right|}{2}\\&=\max\left\{ \mathbf{ber}(A), \mathbf{ber}(D)\right\}.
 \end{align*}
\end{proof}
\bigskip We need the following lemmas for the next results. The next lemma follows from the spectral theorem for positive operators and Jensen inequality; see \cite{KIT}.
\begin{lemma}(The McCarty inequality)\label{3}
 Let $T\in{\mathbb B}({\mathscr H})$, $ T \geqslant 0$ and $x\in {\mathscr H}$ such that $\|x\|\leqslant1$. Then
 \begin{align*}
\left\langle Tx, x\right\rangle^{r} \leqslant \left\langle T^{r}x, x\right\rangle
\end{align*}
 for $r\geqslant1$.
\end{lemma}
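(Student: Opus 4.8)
The plan is to derive this from the spectral theorem for bounded positive operators together with the classical Jensen inequality, after a preliminary normalization of the vector. First I would reduce to the case $\|x\|=1$. If $x=0$ both sides vanish, so assume $x\neq 0$ and set $u=x/\|x\|$. Since $\langle Tx,x\rangle=\|x\|^{2}\langle Tu,u\rangle$ and $\langle T^{r}x,x\rangle=\|x\|^{2}\langle T^{r}u,u\rangle$, once the inequality is established for the unit vector $u$ we obtain
\[
\langle Tx,x\rangle^{r}=\|x\|^{2r}\langle Tu,u\rangle^{r}\leqslant \|x\|^{2r}\langle T^{r}u,u\rangle=\|x\|^{2r-2}\langle T^{r}x,x\rangle\leqslant\langle T^{r}x,x\rangle,
\]
where the last step uses $0\leqslant\|x\|\leqslant 1$, $2r-2\geqslant 0$, and $\langle T^{r}x,x\rangle\geqslant 0$ (because $T^{r}\geqslant 0$).

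Next, with $\|x\|=1$, I would invoke the spectral theorem. Since $T\geqslant 0$ is bounded, $\sigma(T)\subseteq[0,\|T\|]$, and there is a spectral measure $E$ with $T=\int_{\sigma(T)}t\,dE(t)$ and, via the continuous functional calculus, $T^{r}=\int_{\sigma(T)}t^{r}\,dE(t)$. Putting $d\mu(t):=d\langle E(t)x,x\rangle$ gives a Borel probability measure $\mu$ supported on $\sigma(T)\subseteq[0,\infty)$ — it has total mass $\|x\|^{2}=1$ — and
\[
\langle Tx,x\rangle=\int_{\sigma(T)}t\,d\mu(t),\qquad \langle T^{r}x,x\rangle=\int_{\sigma(T)}t^{r}\,d\mu(t).
\]
The function $\phi(t)=t^{r}$ is convex on $[0,\infty)$ for $r\geqslant 1$ (indeed $\phi''(t)=r(r-1)t^{r-2}\geqslant 0$), so Jensen's inequality applied to $\phi$ against the probability measure $\mu$ yields
\[
\langle Tx,x\rangle^{r}=\phi\!\left(\int t\,d\mu(t)\right)\leqslant\int\phi(t)\,d\mu(t)=\langle T^{r}x,x\rangle,
\]
which is the desired inequality for unit $x$; together with the reduction above this completes the proof.

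I do not anticipate a genuine obstacle, as this is the standard McCarthy inequality. The only mild subtlety is that $\mu$ is an honest probability measure precisely when $\|x\|=1$, which is exactly why the normalization is carried out first; alternatively one could state Jensen's inequality in a form valid for sub‑probability measures, but the reduction to a unit vector keeps the argument cleanest.
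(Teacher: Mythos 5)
Your proof is correct and follows essentially the same route as the paper: the paper's proof is exactly your normalization step, reducing to a unit vector $u=x/\|x\|$ and using $\|x\|^{2r-2}\leqslant 1$, while quoting the unit-vector McCarthy inequality from \cite{KIT}. The only difference is that you additionally supply the spectral-measure/Jensen argument for the unit-vector case, which the paper merely cites; that part of your argument is also fine.
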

\begin{proof}
Let $ r\geqslant 1$ and  $x\in {\mathscr H}$ such that $\|x\|\leqslant1$. Fix $u=\frac{x}{\|x\|}$. Using the McCarty inequality we have
$\left\langle Tu, u\right\rangle^{r} \leqslant  \left\langle T^{r}u, u\right\rangle$, whence
\begin{align*}
\left\langle Tx, x\right\rangle^{r} &\leqslant \|x\|^{2r-2} \left\langle T^{r}x, x\right\rangle\\&\leqslant\left\langle T^{r}x, x\right\rangle\qquad(\textrm {since\,}\|x\|\leqslant1\,\textrm {and\,}2r-2\geqslant0).
\end{align*}
Hence, we get the desired result.
\end{proof}
\begin{lemma}\cite[Theorem 1]{KIT}\label{5}
Let $T\in{\mathbb B}({\mathscr H})$ and $x, y\in {\mathscr H}$ be any vectors.  If $f$, $g$ are nonnegative  continuous functions on $[0, \infty)$ which are satisfying the relation $f(t)g(t)=t\,(t\in[0, \infty))$, then
\begin{align*}
| \left\langle Tx, y \right\rangle |^2 \leqslant \left\langle f^2(|T |)x ,x \right\rangle\, \left\langle g^2(| T^{*}|)y,y\right\rangle,
 \end{align*}
 in which $|T|=\left(T^*T\right)^{\frac{1}{2}}$.
 \end{lemma}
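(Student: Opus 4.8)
\emph{Proof proposal.} The natural route is through the polar decomposition $T=U|T|$, where $U$ is the partial isometry with initial space $\overline{\operatorname{ran}|T|}$ and final space $\overline{\operatorname{ran}T}$. Two structural facts will do the work: $U^{*}U|T|=|T|$ (so $U^{*}U$ acts as the identity on the range of any continuous function of $|T|$ vanishing at $0$), and $|T^{*}|=U|T|U^{*}$. The latter holds because $TT^{*}=U|T|^{2}U^{*}=(U|T|U^{*})^{2}$, where the middle equality uses $|T|(U^{*}U)|T|=|T|^{2}$, and $U|T|U^{*}\geqslant0$; uniqueness of positive square roots then gives the identity.

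First I would use that $f$ and $g$ are continuous functions of the single positive operator $|T|$, so $f(|T|)$ and $g(|T|)$ are commuting positive operators and the hypothesis $f(t)g(t)=t$ yields $f(|T|)g(|T|)=|T|$ by the continuous functional calculus. Writing $Tx=Uf(|T|)g(|T|)x$ and shifting $g(|T|)$ and $U$ across the inner product (using self-adjointness of $f(|T|),g(|T|)$),
\begin{align*}
\langle Tx,y\rangle=\langle f(|T|)x,\,g(|T|)U^{*}y\rangle ,
\end{align*}
whence the Cauchy--Schwarz inequality gives
\begin{align*}
|\langle Tx,y\rangle|^{2}\leqslant\|f(|T|)x\|^{2}\,\|g(|T|)U^{*}y\|^{2}=\langle f^{2}(|T|)x,x\rangle\,\langle Ug^{2}(|T|)U^{*}y,y\rangle .
\end{align*}

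Thus the proof reduces to the operator inequality $Ug^{2}(|T|)U^{*}\leqslant g^{2}(|T^{*}|)$. For this I would first verify $Uh(|T|)U^{*}=h(|T^{*}|)$ for every continuous $h$ with $h(0)=0$: it holds for $h(t)=t^{n}$ ($n\geqslant1$) since $U|T|^{n}U^{*}=(U|T|U^{*})^{n}=|T^{*}|^{n}$ (again using $|T|(U^{*}U)|T|=|T|^{2}$), hence for all polynomials vanishing at $0$ by linearity, and hence for all such continuous $h$ by a Stone--Weierstrass / continuity-of-functional-calculus argument on the compact spectrum. Applying this to $h(t)=g^{2}(t)-g^{2}(0)$ and adding back $U\bigl(g^{2}(0)I\bigr)U^{*}=g^{2}(0)\,UU^{*}$ yields
\begin{align*}
Ug^{2}(|T|)U^{*}=g^{2}(|T^{*}|)-g^{2}(0)\,(I-UU^{*})\leqslant g^{2}(|T^{*}|),
\end{align*}
since $g^{2}(0)\geqslant0$ and $I-UU^{*}\geqslant0$. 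Combined with the previous display, this is the desired inequality.

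The main obstacle is precisely this last step when $g(0)\neq0$ (which does occur, e.g.\ $f(t)=t$, $g(t)\equiv1$): the clean intertwining identity $Uh(|T|)U^{*}=h(|T^{*}|)$ is only available for $h$ vanishing at the origin, so one must carry the defect projection $I-UU^{*}$ along to absorb the constant term $g^{2}(0)$. If $T$ has dense range this vanishes, and alternatively one could first prove the inequality for such $T$ (or for injective $|T|$) and then pass to the general case by approximation; I find the partial-isometry bookkeeping above more transparent. The remaining ingredients — the functional-calculus manipulations and Cauchy--Schwarz — are routine.
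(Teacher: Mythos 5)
Your proof is correct and follows essentially the same route as the cited source of the lemma (the paper offers no proof of its own, only the reference to Kittaneh's Theorem 1): polar decomposition $T=U|T|$, the factorization $|T|=f(|T|)g(|T|)$ via functional calculus, Cauchy--Schwarz, and the intertwining of $U$ with continuous functions of $|T|$ and $|T^{*}|$. Your defect-projection treatment of the case $g(0)\neq0$ is sound, though it can be shortcut: the identity $g(|T|)U^{*}=U^{*}g(|T^{*}|)$ holds for \emph{every} continuous $g$ (the constant term passes through $U^{*}$ untouched), and since $\|U^{*}\|\leqslant1$ this gives $\|g(|T|)U^{*}y\|^{2}\leqslant\langle g^{2}(|T^{*}|)y,y\rangle$ directly.
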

\begin{theorem}\label{main1}
Let
$\mathbf{T}=\left[\begin{array}{cc}
 0&X\\
 Y&0
 \end{array}\right]\in {\mathbb B}({\mathscr H(\Omega_1)\oplus\mathscr H(\Omega_2)})$,  $r\geqslant 1$ and $f$, $g$ be nonnegative  continuous  functions on $[0, \infty)$ satisfying the relation $f(t)g(t)=t\,(t\in[0, \infty))$. Then
 \begin{align*}
  \mathbf{ber}^{r}(\mathbf{T})\leqslant 2^{r-2} \mathbf{ber}^\frac{1}{2}\left(f^{2r}(|X|)+g^{2r}(|Y^*|)\right) \mathbf{ber}^\frac{1}{2}\left(f^{2r}(|Y|)+g^{2r}(|X^*|)\right)
   \end{align*}
   \end{theorem}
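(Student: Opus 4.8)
The plan is to evaluate the Berezin symbol of $\mathbf{T}$ on a normalized reproducing kernel of the direct sum and then reduce everything to scalar inequalities by means of Lemma \ref{3}, Lemma \ref{5}, the convexity of $t\mapsto t^{r}$, and the Cauchy--Schwarz inequality for two-term sums.

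First I would fix $(\lambda_{1},\lambda_{2})\in\Omega_{1}\times\Omega_{2}$ and write the normalized reproducing kernel of ${\mathscr H}(\Omega_{1})\oplus{\mathscr H}(\Omega_{2})$ at $(\lambda_{1},\lambda_{2})$ as $\left[\begin{array}{c} u\\ v\end{array}\right]$, where $u=\alpha\hat{k}_{\lambda_{1}}$ and $v=\beta\hat{k}_{\lambda_{2}}$ with $\alpha,\beta\geqslant0$ and $\alpha^{2}+\beta^{2}=1$; in particular $\|u\|,\|v\|\leqslant1$ and $\alpha\beta\leqslant\tfrac12$. A direct computation of $\mathbf{T}\left[\begin{array}{c}u\\v\end{array}\right]$ gives
\[
\widetilde{\mathbf{T}}(\lambda_{1},\lambda_{2})=\langle Xv,u\rangle+\langle Yu,v\rangle ,
\]
so that $\bigl|\widetilde{\mathbf{T}}(\lambda_{1},\lambda_{2})\bigr|\leqslant|\langle Xv,u\rangle|+|\langle Yu,v\rangle|$.

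Next I would estimate each term separately. Applying Lemma \ref{5} to $X$ (with $x=v$, $y=u$) yields $|\langle Xv,u\rangle|^{2}\leqslant\langle f^{2}(|X|)v,v\rangle\,\langle g^{2}(|X^{*}|)u,u\rangle$. Raising this to the power $r\geqslant1$ and then applying the McCarty inequality (Lemma \ref{3}) to each of the positive operators $f^{2}(|X|)$ and $g^{2}(|X^{*}|)$ — legitimate since $\|u\|,\|v\|\leqslant1$ — together with the functional-calculus identity $\bigl(f^{2}(\,\cdot\,)\bigr)^{r}=f^{2r}(\,\cdot\,)$, gives
\[
|\langle Xv,u\rangle|^{r}\leqslant\langle f^{2r}(|X|)v,v\rangle^{1/2}\,\langle g^{2r}(|X^{*}|)u,u\rangle^{1/2},
\]
and symmetrically $|\langle Yu,v\rangle|^{r}\leqslant\langle f^{2r}(|Y|)u,u\rangle^{1/2}\,\langle g^{2r}(|Y^{*}|)v,v\rangle^{1/2}$. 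Combining these via the convexity estimate $(a+b)^{r}\leqslant2^{r-1}(a^{r}+b^{r})$ and then the Cauchy--Schwarz inequality $a_{1}b_{1}+a_{2}b_{2}\leqslant(a_{1}^{2}+a_{2}^{2})^{1/2}(b_{1}^{2}+b_{2}^{2})^{1/2}$ — pairing the two $u$-factors and the two $v$-factors — I would reach
\[
\bigl|\widetilde{\mathbf{T}}(\lambda_{1},\lambda_{2})\bigr|^{r}\leqslant2^{r-1}\bigl\langle(f^{2r}(|Y|)+g^{2r}(|X^{*}|))u,u\bigr\rangle^{1/2}\bigl\langle(f^{2r}(|X|)+g^{2r}(|Y^{*}|))v,v\bigr\rangle^{1/2}.
\]

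Finally, since $f^{2r}(|Y|)+g^{2r}(|X^{*}|)$ and $f^{2r}(|X|)+g^{2r}(|Y^{*}|)$ are positive, $\langle(f^{2r}(|Y|)+g^{2r}(|X^{*}|))u,u\rangle=\alpha^{2}\,\widetilde{\bigl(f^{2r}(|Y|)+g^{2r}(|X^{*}|)\bigr)}(\lambda_{1})\leqslant\alpha^{2}\,\mathbf{ber}\bigl(f^{2r}(|Y|)+g^{2r}(|X^{*}|)\bigr)$, and likewise $\langle(f^{2r}(|X|)+g^{2r}(|Y^{*}|))v,v\rangle\leqslant\beta^{2}\,\mathbf{ber}\bigl(f^{2r}(|X|)+g^{2r}(|Y^{*}|)\bigr)$. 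Substituting and using $\alpha\beta\leqslant\tfrac12$ yields
\[
\bigl|\widetilde{\mathbf{T}}(\lambda_{1},\lambda_{2})\bigr|^{r}\leqslant2^{r-2}\,\mathbf{ber}^{1/2}\bigl(f^{2r}(|X|)+g^{2r}(|Y^{*}|)\bigr)\,\mathbf{ber}^{1/2}\bigl(f^{2r}(|Y|)+g^{2r}(|X^{*}|)\bigr),
\]
and taking the supremum over $(\lambda_{1},\lambda_{2})\in\Omega_{1}\times\Omega_{2}$ finishes the argument. I expect the two delicate points to be: keeping the scalar weights $\alpha^{2}$ and $\beta^{2}$ alive until the very last line — this is exactly what upgrades the constant from $2^{r-1}$ to $2^{r-2}$ — and choosing the correct pairing in the Cauchy--Schwarz step so that the $u$-dependence and the $v$-dependence decouple precisely into the two prescribed operator sums.
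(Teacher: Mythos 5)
Your argument is correct and follows essentially the same route as the paper's own proof: triangle inequality, the convexity bound $(a+b)^{r}\leqslant 2^{r-1}(a^{r}+b^{r})$, Lemma \ref{5}, Lemma \ref{3}, the two-term Cauchy--Schwarz pairing, the Berezin-number bound on each resulting positive operator, and finally the arithmetic--geometric mean step ($\alpha\beta\leqslant\tfrac12$, i.e. $\|k_{\lambda_1}\|\,\|k_{\lambda_2}\|\leqslant\tfrac12$) that improves the constant to $2^{r-2}$. Your $u=\alpha\hat{k}_{\lambda_1}$, $v=\beta\hat{k}_{\lambda_2}$ with $\alpha^{2}+\beta^{2}=1$ is just the paper's normalized kernel $[k_{\lambda_1};k_{\lambda_2}]$ written componentwise, so there is no substantive difference.
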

\begin{proof}
For every $(\lambda_1,\lambda_2)\in\Omega_1\times\Omega_2$, let $\mathbf{\hat{k}_{(\lambda_1,\lambda_2)}}=\left[\begin{array}{cc}
 k_{\lambda_1}\\
 k_{\lambda_2}
 \end{array}\right]$ be the normalized reproducing kernel of ${\mathscr H(\Omega_1)\oplus\mathscr H(\Omega_2)}$ (i.e., $\|k_{\lambda_1}\|^2+\|k_{\lambda_2}\|^2=1$). Then
 \begin{align*}
 &\left|\widetilde{\mathbf{T}}(\lambda_1,\lambda_2)\right|^r\\&
 =\left|\left\langle \mathbf{T}\mathbf{\hat{k}_{(\lambda_1,\lambda_2)}}, \mathbf{\hat{k}_{(\lambda_1,\lambda_2)}} \right\rangle \right|^{r}\\&
 =|\left\langle Xk_{\lambda_2}, k_{\lambda_1} \right\rangle+\left\langle Yk_{\lambda_1}, k_{\lambda_2} \right\rangle |^{r}\\&
 \leqslant\left(|\left\langle Xk_{\lambda_2}, k_{\lambda_1} \right\rangle|+|\left\langle Yk_{\lambda_1}, k_{\lambda_2} \right\rangle |\right)^{r} \qquad (\textrm {by the triangular inequality})\\&
 \leqslant\frac{2^r}{2}\left(|\left\langle Xk_{\lambda_2}, k_{\lambda_1} \right\rangle|^r+|\left\langle Yk_{\lambda_1}, k_{\lambda_2} \right\rangle |^{r}\right)
 \qquad (\textrm {by the convexity\,} f(t)=t^r)\\&
 \leqslant\frac{2^r}{2}\Big(\left(\left\langle f^2(|X|)k_{\lambda_2}, k_{\lambda_2} \right\rangle^\frac{1}{2}\left\langle g^2(|X^*|)k_{\lambda_1}, k_{\lambda_1} \right\rangle^\frac{1}{2}\right)^r
 \\&\qquad+\left(\left\langle f^2(|Y|)k_{\lambda_1}, k_{\lambda_1} \right\rangle^\frac{1}{2}\left\langle g^2(|Y^*|)k_{\lambda_2}, k_{\lambda_2} \right\rangle^\frac{1}{2} \right)^{r}\Big)
\qquad(\textrm {by Lemma\,\,}\ref{5})\\&\leqslant\frac{2^r}{2}\left(\left\langle f^{2r}(|X|)k_{\lambda_2}, k_{\lambda_2} \right\rangle^\frac{1}{2}\left\langle g^{2r}|X^*|k_{\lambda_1}, k_{\lambda_1} \right\rangle^\frac{1}{2}
 +\left\langle f^{2r}(|Y|)k_{\lambda_1}, k_{\lambda_1} \right\rangle^\frac{1}{2}\left\langle g^{2r}(|Y^*|)k_{\lambda_2}, k_{\lambda_2} \right\rangle^\frac{1}{2}\right)\\&
 \qquad\qquad\qquad\qquad\qquad\qquad\qquad\qquad\qquad (\textrm {by Lemma\,\,\ref{3})})\\&
 \leqslant\frac{2^r}{2}\left(\left\langle f^{2r}(|X|)k_{\lambda_2}, k_{\lambda_2} \right\rangle+\left\langle g^{2r}(|Y^*|)k_{\lambda_2}, k_{\lambda_2} \right\rangle\right)^\frac{1}{2}\\&\,\,\,\times
 \left(\left\langle f^{2r}(|Y|)k_{\lambda_1}, k_{\lambda_1} \right\rangle+\left\langle g^{2r}(|X^*|)k_{\lambda_1}, k_{\lambda_1} \right\rangle\right)^\frac{1}{2}
  \,\,\, (\textrm {by the Cauchy-Schwarz inequality})\\&
  =\frac{2^r}{2}\left\langle (f^{2r}(|X|)+g^{2r}(|Y^*|))k_{\lambda_2}, k_{\lambda_2} \right\rangle^\frac{1}{2} \left\langle (f^{2r}(|Y|)+g^{2r}(|X^*|))k_{\lambda_1}, k_{\lambda_1} \right\rangle^\frac{1}{2} \\&
 \leqslant\frac{2^r}{2} \mathbf{ber}^\frac{1}{2}\left(f^{2r}(|X|)+g^{2r}(|Y^*|)\right) \mathbf{ber}^\frac{1}{2}\left(f^{2r}(|Y|)+g^{2r}(|X^*|)\right)\|k_{\lambda_1}\|\|k_{\lambda_2}\|\\&
 \leqslant\frac{2^r}{2} \mathbf{ber}^\frac{1}{2}\left(f^{2r}(|X|)+g^{2r}(|Y^*|)\right) \mathbf{ber}^\frac{1}{2}
 \left(f^{2r}(|Y|)+g^{2r}(|X^*|)\right)\left(\frac{\|k_{\lambda_1}\|^2+\|k_{\lambda_2}\|^2}{2}\right)
 \\&\qquad\qquad\qquad\qquad\qquad\qquad\qquad(\textrm {by the arithmetic-geometric mean inequality})\\&
 =\frac{2^r}{4} \mathbf{ber}^\frac{1}{2}\left(f^{2r}(|X|)+g^{2r}(|Y^*|)\right) \mathbf{ber}^\frac{1}{2}\left(f^{2r}(|Y|)+g^{2r}(|X^*|)\right).
 \end{align*}
 Now, taking the supremum over all $(\lambda_1,\lambda_2)\in \Omega_1\times \Omega_2$ we get the desired result.
\end{proof}
\bigskip Theorem \ref{main1} includes a special case as follows.
\begin{corollary}\label{corol1}
Let
$\mathbf{T}=\left[\begin{array}{cc}
 0&X\\
 Y&0
 \end{array}\right]\in {\mathbb B}({\mathscr H_1\oplus\mathscr H_2})$, $0\leqslant p\leqslant1$ and $r\geqslant1$. Then
 \begin{align*}
  \mathbf{ber}^{r}(\mathbf{T})\leqslant 2^{r-2}   \mathbf{ber}^{\frac{1}{2}}\left(  | X |^{2rp} +   | Y^* |^{2r(1-p)} \right) \mathbf{ber}^{\frac{1}{2}}\left(| Y |^{2rp} +  | X^* |^{2r(1-p)} \right)
   \end{align*}
   \end{corollary}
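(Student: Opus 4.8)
The plan is to derive Corollary \ref{corol1} from Theorem \ref{main1} by a judicious choice of the functions $f$ and $g$. The theorem holds for any pair of nonnegative continuous functions on $[0,\infty)$ satisfying $f(t)g(t)=t$; the standard family realizing the interpolation parameter $p\in[0,1]$ is
\begin{align*}
f(t)=t^{p},\qquad g(t)=t^{1-p}\qquad(t\in[0,\infty)),
\end{align*}
for which indeed $f(t)g(t)=t^{p}t^{1-p}=t$, and both functions are nonnegative and continuous on $[0,\infty)$.

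With this choice I would simply substitute into the conclusion of Theorem \ref{main1}. We have $f^{2r}(t)=t^{2rp}$ and $g^{2r}(t)=t^{2r(1-p)}$, hence by the functional calculus $f^{2r}(|X|)=|X|^{2rp}$, $g^{2r}(|Y^{*}|)=|Y^{*}|^{2r(1-p)}$, $f^{2r}(|Y|)=|Y|^{2rp}$, and $g^{2r}(|X^{*}|)=|X^{*}|^{2r(1-p)}$. Plugging these into
\begin{align*}
\mathbf{ber}^{r}(\mathbf{T})\leqslant 2^{r-2}\,\mathbf{ber}^{\frac{1}{2}}\!\left(f^{2r}(|X|)+g^{2r}(|Y^{*}|)\right)\mathbf{ber}^{\frac{1}{2}}\!\left(f^{2r}(|Y|)+g^{2r}(|X^{*}|)\right)
\end{align*}
gives exactly
\begin{align*}
\mathbf{ber}^{r}(\mathbf{T})\leqslant 2^{r-2}\,\mathbf{ber}^{\frac{1}{2}}\!\left(|X|^{2rp}+|Y^{*}|^{2r(1-p)}\right)\mathbf{ber}^{\frac{1}{2}}\!\left(|Y|^{2rp}+|X^{*}|^{2r(1-p)}\right),
\end{align*}
which is the asserted inequality.

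There is essentially no obstacle here — the corollary is a direct specialization — but the one point deserving a word of care is the behaviour of $t\mapsto t^{p}$ and $t\mapsto t^{1-p}$ at $t=0$ when $p=0$ or $p=1$; in those boundary cases one uses the convention $t^{0}\equiv 1$ (equivalently, one may note $t^{0}$ is the constant function $1$, still continuous and nonnegative on $[0,\infty)$, with $1\cdot t=t$), so the hypotheses of Theorem \ref{main1} remain satisfied throughout the closed interval $0\leqslant p\leqslant 1$. Thus the proof reduces to recording the choice $f(t)=t^{p}$, $g(t)=t^{1-p}$, verifying $f(t)g(t)=t$, and invoking Theorem \ref{main1}.
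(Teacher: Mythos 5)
Your proof is correct and is exactly the paper's argument: the corollary is obtained by specializing the preceding theorem to $f(t)=t^{p}$, $g(t)=t^{1-p}$, which is precisely what the paper does. Your extra remark about the boundary cases $p=0,1$ is a harmless (and reasonable) clarification, but the route is the same.
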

  \begin{proof}
  The result follows immediately from Theorem \ref{main1} for $f(t)=t^p$ and $g(t)=t^{1-p}\,\,(0\leqslant p\leqslant1)$.
  \end{proof}
\begin{theorem}\label{main3}
Let
$A, B, X\in {\mathbb B}({\mathscr H}(\Omega))$,   $r\geqslant 1$ and $f$, $g$ be nonnegative  continuous  functions on $[0, \infty)$ satisfying the relation $f(t)g(t)=t\,(t\in[0, \infty))$. Then
 \begin{align*}
 \mathbf{ber}^{r}(A^*XB)\leqslant  \mathbf{ber}\left(\frac{1}{p}\left[B^*f^2(|X|)B\right]^{\frac{rp}{2}}+\frac{1}{q}\left[A^*g^2(|X^*|)A\right]^{\frac{rq}{2}}\right),
   \end{align*}
   where $\frac{1}{p}+\frac{1}{q}=1$ and $pr\geqslant qr\geqslant2$.
   \end{theorem}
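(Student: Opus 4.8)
The plan is to reduce the Berezin symbol of $A^*XB$ at a point $\lambda$ to an inner product $\langle X u_\lambda, v_\lambda\rangle$ with $u_\lambda=B\hat k_\lambda$ and $v_\lambda=A\hat k_\lambda$, then apply in turn Lemma~\ref{5}, Young's inequality, and the McCarty inequality (Lemma~\ref{3}) pointwise in $\lambda$, and finally pass to the supremum.

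Concretely, I would fix $\lambda\in\Omega$ and use $\widetilde{A^*XB}(\lambda)=\langle A^*XB\hat k_\lambda,\hat k_\lambda\rangle=\langle X(B\hat k_\lambda),A\hat k_\lambda\rangle$. Applying Lemma~\ref{5} with $x=B\hat k_\lambda$, $y=A\hat k_\lambda$ gives $|\widetilde{A^*XB}(\lambda)|^2\le\langle B^*f^2(|X|)B\hat k_\lambda,\hat k_\lambda\rangle\,\langle A^*g^2(|X^*|)A\hat k_\lambda,\hat k_\lambda\rangle$, since $\langle f^2(|X|)B\hat k_\lambda,B\hat k_\lambda\rangle=\langle B^*f^2(|X|)B\hat k_\lambda,\hat k_\lambda\rangle$ and similarly for the second factor. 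Raising to the power $r/2$ and then applying Young's inequality $st\le \tfrac{s^p}{p}+\tfrac{t^q}{q}$ (valid since $\tfrac1p+\tfrac1q=1$), I obtain
\[
|\widetilde{A^*XB}(\lambda)|^{r}\le\frac1p\langle B^*f^2(|X|)B\hat k_\lambda,\hat k_\lambda\rangle^{\frac{rp}{2}}+\frac1q\langle A^*g^2(|X^*|)A\hat k_\lambda,\hat k_\lambda\rangle^{\frac{rq}{2}}.
\]
The operators $B^*f^2(|X|)B$ and $A^*g^2(|X^*|)A$ are positive (because $f,g\ge0$ force $f^2(|X|),g^2(|X^*|)\ge0$), and the hypothesis $pr\ge qr\ge2$ ensures $\tfrac{rp}{2}\ge1$ and $\tfrac{rq}{2}\ge1$, so Lemma~\ref{3} applies with the unit vector $\hat k_\lambda$ and yields $\langle B^*f^2(|X|)B\hat k_\lambda,\hat k_\lambda\rangle^{rp/2}\le\langle[B^*f^2(|X|)B]^{rp/2}\hat k_\lambda,\hat k_\lambda\rangle$, and likewise for the $A$-term.

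Substituting these two estimates and using linearity of $\lambda\mapsto\langle\,\cdot\,\hat k_\lambda,\hat k_\lambda\rangle$, the right-hand side becomes $\big\langle\big(\tfrac1p[B^*f^2(|X|)B]^{rp/2}+\tfrac1q[A^*g^2(|X^*|)A]^{rq/2}\big)\hat k_\lambda,\hat k_\lambda\big\rangle$, which is at most the Berezin number of that (positive) operator; taking the supremum over $\lambda\in\Omega$ gives the claim. I do not anticipate a real obstacle: the only delicate point is the bookkeeping of exponents, namely that $pr\ge qr\ge2$ is exactly what legitimizes both uses of Lemma~\ref{3}, and that the positivity of the sandwiched operators is what permits that lemma to be invoked at all.
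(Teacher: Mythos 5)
Your proposal is correct and follows essentially the same route as the paper's proof: rewrite $\langle A^*XB\hat k_\lambda,\hat k_\lambda\rangle$ as $\langle X B\hat k_\lambda, A\hat k_\lambda\rangle$, apply Lemma~\ref{5}, then Young's inequality, then Lemma~\ref{3} with the exponents $rp/2,rq/2\geqslant1$, and take the supremum. No issues.
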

\begin{proof}
For every $\lambda\in\Omega$, let $\mathbf{\hat{k}_{\lambda}}$ be the normalized reproducing kernel of ${\mathscr H}(\Omega)$. Then
\begin{align*}
 \left|\langle A^*XB\mathbf{\hat{k}_{\lambda}},\mathbf{\hat{k}_{\lambda}}\rangle\right|^r
 &= \left|\langle XB\mathbf{\hat{k}_{\lambda}},A\mathbf{\hat{k}_{\lambda}}\rangle\right|^r
 \\&\leqslant\left(\langle f^2(|X|)B\mathbf{\hat{k}_{\lambda}},B\mathbf{\hat{k}_{\lambda}}\rangle\langle g^2(|X^*|)A\mathbf{\hat{k}_{\lambda}},A\mathbf{\hat{k}_{\lambda}}\rangle\right)^{\frac{r}{2}}
 \\&
 \qquad\qquad\qquad\qquad\qquad\qquad\qquad\qquad\qquad (\textrm {by Lemma\,\,\ref{5}})
 \\&\leqslant\frac{1}{p}\langle f^2(|X|)B\mathbf{\hat{k}_{\lambda}},B\mathbf{\hat{k}_{\lambda}}\rangle^{\frac{rp}{2}}+\frac{1}{q}\langle g^2(|X^*|)A\mathbf{\hat{k}_{\lambda}},A\mathbf{\hat{k}_{\lambda}}\rangle^{\frac{rq}{2}}
 \\&
 \qquad\qquad\qquad\qquad\qquad\qquad\qquad\qquad\qquad (\textrm {by the Young inequaliy})
 \\&=\frac{1}{p}\langle B^*f^2(|X|)B\mathbf{\hat{k}_{\lambda}},\mathbf{\hat{k}_{\lambda}}\rangle^{\frac{rp}{2}}+\frac{1}{q}\langle A^*g^2(|X^*|)A\mathbf{\hat{k}_{\lambda}},\mathbf{\hat{k}_{\lambda}}\rangle^{\frac{rq}{2}}
 \\&\leqslant\frac{1}{p}\langle \left[B^*f^2(|X|)B\right]^{\frac{rp}{2}}\mathbf{\hat{k}_{\lambda}},\mathbf{\hat{k}_{\lambda}}\rangle+\frac{1}{q}\langle
 \left[A^*g^2(|X^*|)A\right]^{\frac{rq}{2}}\mathbf{\hat{k}_{\lambda}},\mathbf{\hat{k}_{\lambda}}\rangle\\&
 \qquad\qquad\qquad\qquad\qquad\qquad\qquad\qquad\qquad (\textrm {by Lemma\,\,\ref{3}})
 \\&=\left\langle \left(\frac{1}{p} \left[B^*f^2(|X|)B\right]^{\frac{rp}{2}}+\frac{1}{q}\left[A^*g^2(|X^*|)A\right]^{\frac{rq}{2}}\right)\mathbf{\hat{k}_{\lambda}},\mathbf{\hat{k}_{\lambda}}\right\rangle
  \\& \leqslant\mathbf{ber}\left(\frac{1}{p}\left[B^*f^2(|X|)B\right]^{\frac{rp}{2}}+\frac{1}{q}\left[A^*g^2(|X^*|)A\right]^{\frac{rq}{2}}\right),
   \end{align*}
   whence
   \begin{align*}
 \mathbf{ber}^r(A^*XB)=\underset{\lambda\in
\Omega
}{\sup}\left|\langle A^*XB\mathbf{\hat{k}_{\lambda}},\mathbf{\hat{k}_{\lambda}}\rangle\right|^r
  \leqslant\mathbf{ber}\left(\frac{1}{p}\left[B^*f^2(|X|)B\right]^{\frac{rp}{2}}+\frac{1}{q}\left[A^*g^2(|X^*|)A\right]^{\frac{rq}{2}}\right).
   \end{align*}
 \end{proof}
\begin{remark}
In Theorem \ref{main3}, if we add the hypophysis of contraction for operators $A$ and $B$ (i.e., $A^*A\leqslant I$ and $B^*B\leqslant I$) then by using Lemma \ref{3} and with a similar fashion in the proof in the Theorem \ref{main3} we get the inequality
\begin{align}\label{sheb}
 \mathbf{ber}^{r}(A^*XB)\leqslant  \mathbf{ber}\left(\frac{1}{p}B^*f^{rp}(|X|)B+\frac{1}{q}A^*g^{rq}(|X^*|)A\right),
   \end{align}
 where   $r\geqslant 1$, $\frac{1}{p}+\frac{1}{q}=1$ such that $pr\geqslant qr\geqslant2$ and $f$, $g$ be nonnegative  continuous  functions on $[0, \infty)$ satisfying the relation $f(t)g(t)=t\,(t\in[0, \infty))$.
\end{remark}
The next result follows  from Theorem \ref{main3} and inequality \eqref{sheb} for $f(t)=t^\alpha$ and $g(t)=t^{1-\alpha}\,\,(0\leqslant \alpha\leqslant1)$.
\begin{corollary}
Let
$A, B, X\in {\mathbb B}({\mathscr H}(\Omega))$,   $r\geqslant 1$, $\frac{1}{p}+\frac{1}{q}=1$ such that $pr\geqslant qr\geqslant2$ and $0\leqslant\alpha\leqslant1$. Then
 \begin{align*}
 \mathbf{ber}^{r}(A^*XB)\leqslant  \mathbf{ber}\left(\frac{1}{p}\left[B^*|X|^{2\alpha}B\right]^{\frac{rp}{2}}+\frac{1}{q}\left[A^*|X^*|^{2(1-\alpha)}A\right]^{\frac{rq}{2}}\right),
   \end{align*}
In particular, if $A$ and $B$ be contraction, then
\begin{align*}
 \mathbf{ber}^{r}(A^*XB)\leqslant  \mathbf{ber}\left(\frac{1}{p}B^*|X|^{rp\alpha}B+\frac{1}{q}A^*|X^*|^{rp(1-\alpha)}A\right).
   \end{align*}
   \end{corollary}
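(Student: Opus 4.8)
The plan is to derive both assertions as immediate specializations, the first of Theorem \ref{main3} and the second of inequality \eqref{sheb}, obtained by taking the pair $f(t)=t^{\alpha}$ and $g(t)=t^{1-\alpha}$ with $0\leqslant\alpha\leqslant1$. First I would record that this pair meets the hypotheses imposed in both results: $f$ and $g$ are nonnegative and continuous on $[0,\infty)$ (with the convention $0^{0}=1$ handling the endpoints $\alpha=0$ and $\alpha=1$), and $f(t)g(t)=t^{\alpha}t^{1-\alpha}=t$ for every $t\in[0,\infty)$.

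For the general inequality I would apply Theorem \ref{main3} directly. Since $|X|=(X^{*}X)^{1/2}\geqslant0$ and $|X^{*}|=(XX^{*})^{1/2}\geqslant0$, the continuous functional calculus for positive operators gives $f^{2}(|X|)=|X|^{2\alpha}$ and $g^{2}(|X^{*}|)=|X^{*}|^{2(1-\alpha)}$. Inserting these into the conclusion of Theorem \ref{main3},
\begin{align*}
\mathbf{ber}^{r}(A^{*}XB)\leqslant\mathbf{ber}\left(\frac{1}{p}\left[B^{*}f^{2}(|X|)B\right]^{\frac{rp}{2}}+\frac{1}{q}\left[A^{*}g^{2}(|X^{*}|)A\right]^{\frac{rq}{2}}\right),
\end{align*}
produces exactly the first displayed bound, under the standing assumptions $r\geqslant1$, $\frac{1}{p}+\frac{1}{q}=1$ and $pr\geqslant qr\geqslant2$.

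For the ``in particular'' statement I would instead invoke \eqref{sheb}, which applies precisely because $A$ and $B$ are now assumed to be contractions, i.e. $A^{*}A\leqslant I$ and $B^{*}B\leqslant I$. Here the pertinent identities are $f^{rp}(|X|)=|X|^{rp\alpha}$ and $g^{rq}(|X^{*}|)=|X^{*}|^{rq(1-\alpha)}$, again by the functional calculus for $|X|,|X^{*}|\geqslant0$; substituting them into
\begin{align*}
\mathbf{ber}^{r}(A^{*}XB)\leqslant\mathbf{ber}\left(\frac{1}{p}B^{*}f^{rp}(|X|)B+\frac{1}{q}A^{*}g^{rq}(|X^{*}|)A\right)
\end{align*}
yields the asserted inequality (the exponent appearing on $|X^{*}|$ being $rq(1-\alpha)$).

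I do not expect a genuine obstacle here: the argument is a routine substitution into already established inequalities. The only two points deserving a line of care are the admissibility of $f$ and $g$ at the boundary parameter values $\alpha\in\{0,1\}$ and at $t=0$, and the justification of the power identities $f^{2}(|X|)=|X|^{2\alpha}$, $g^{rq}(|X^{*}|)=|X^{*}|^{rq(1-\alpha)}$, and the like, through the spectral theorem for the positive operators $|X|$ and $|X^{*}|$; both are standard and need no hypotheses beyond those already in force.
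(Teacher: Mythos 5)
Your proposal is correct and is essentially identical to the paper's own argument, which likewise obtains both inequalities by substituting $f(t)=t^{\alpha}$ and $g(t)=t^{1-\alpha}$ into Theorem \ref{main3} and inequality \eqref{sheb}, respectively. One worthwhile observation you make: the substitution into \eqref{sheb} yields the exponent $rq(1-\alpha)$ on $|X^{*}|$, so the exponent $rp(1-\alpha)$ appearing in the stated corollary is a typo, and your corrected form is what actually follows.
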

 Now, we need the following lemma for the next result.
\begin{lemma}\label{man1}
Let
$X, Y\in {\mathbb B}({\mathscr H}(\Omega))$. If $\mathbf{ber}\left(\left[\begin{array}{cc} X&0\\0&0 \end{array}\right]\right)\leqslant\mathbf{ber}\left(\left[\begin{array}{cc} Y&0\\0&0 \end{array}\right]\right)$, then
$\mathbf{ber}(X)\leqslant \mathbf{ber}(Y)$.
\end{lemma}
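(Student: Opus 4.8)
The plan is to show that appending a zero block leaves the Berezin number unchanged, i.e.
\begin{equation*}
\mathbf{ber}\left(\left[\begin{array}{cc} X & 0 \\ 0 & 0 \end{array}\right]\right) = \mathbf{ber}(X) \qquad (X \in {\mathbb B}({\mathscr H}(\Omega))),
\end{equation*}
and likewise with $X$ replaced by $Y$. Granting this identity the lemma is immediate: the hypothesis $\mathbf{ber}\left(\left[\begin{array}{cc} X&0\\0&0 \end{array}\right]\right)\leqslant\mathbf{ber}\left(\left[\begin{array}{cc} Y&0\\0&0 \end{array}\right]\right)$ becomes exactly $\mathbf{ber}(X)\leqslant\mathbf{ber}(Y)$.

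For the inequality $\mathbf{ber}\left(\left[\begin{array}{cc} X&0\\0&0 \end{array}\right]\right)\leqslant\mathbf{ber}(X)$ I would apply inequality \eqref{max} of Corollary \ref{corcor} to the block diagonal operator with corner entries $A=X$ and $D=0$, obtaining $\mathbf{ber}\left(\left[\begin{array}{cc} X&0\\0&0 \end{array}\right]\right)\leqslant\max\{\mathbf{ber}(X),\mathbf{ber}(0)\}=\mathbf{ber}(X)$. For the reverse inequality I would use the reproducing-kernel structure of the direct sum. Viewing ${\mathscr H}(\Omega)\oplus{\mathscr H}(\Omega)$ as a reproducing kernel Hilbert space over the disjoint union of two copies of $\Omega$, the normalized reproducing kernels arising from the first copy are precisely the vectors $\left[\begin{array}{c}\hat{k}_\lambda\\0\end{array}\right]$ with $\lambda\in\Omega$, where $\hat{k}_\lambda$ is the normalized reproducing kernel of ${\mathscr H}(\Omega)$. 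Evaluating the Berezin symbol of $\left[\begin{array}{cc} X&0\\0&0 \end{array}\right]$ on these gives
\begin{equation*}
\left\langle \left[\begin{array}{cc} X&0\\0&0 \end{array}\right]\left[\begin{array}{c}\hat{k}_\lambda\\0\end{array}\right], \left[\begin{array}{c}\hat{k}_\lambda\\0\end{array}\right]\right\rangle = \langle X\hat{k}_\lambda,\hat{k}_\lambda\rangle = \widetilde{X}(\lambda),
\end{equation*}
and taking the supremum over $\lambda\in\Omega$ yields $\mathbf{ber}\left(\left[\begin{array}{cc} X&0\\0&0 \end{array}\right]\right)\geqslant\sup_{\lambda\in\Omega}|\widetilde{X}(\lambda)|=\mathbf{ber}(X)$. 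Combining the two bounds proves the displayed identity.

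The computations themselves are routine; the only genuine point of care is the identification in the previous paragraph of the normalized reproducing kernels of the direct sum. One must note that ${\mathscr H}(\Omega)\oplus{\mathscr H}(\Omega)$ is itself a functional Hilbert space, concretely over two disjoint copies of $\Omega$, the reproducing kernel at a point $\lambda$ of the first copy being $\left[\begin{array}{c}k_\lambda\\0\end{array}\right]$, since for $f=(f_1,f_2)$ one has $\langle f,(k_\lambda,0)\rangle=\langle f_1,k_\lambda\rangle=f_1(\lambda)$; similarly for the second copy the kernels are $\left[\begin{array}{c}0\\k_\lambda\end{array}\right]$, whose Berezin symbols for the padded operator vanish identically. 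Once this bookkeeping is in place there is no remaining obstacle: the content of Lemma \ref{man1} is essentially the observation that appending a zero block alters neither the Berezin symbol nor the Berezin number of an operator.
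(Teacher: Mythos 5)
Your proposal is correct and follows essentially the paper's own route: the paper likewise evaluates the padded operator at the vectors $\bigl[\begin{smallmatrix}\hat k_\lambda\\ 0\end{smallmatrix}\bigr]$ to bound $|\widetilde X(\lambda)|$ by the Berezin number of the block operator, and then applies inequality \eqref{max} to pass from the padded operator to the Berezin number of the single entry. The only cosmetic difference is that you package these two ingredients as the identity $\mathbf{ber}\left(\left[\begin{smallmatrix} X&0\\0&0\end{smallmatrix}\right]\right)=\mathbf{ber}(X)$ (making the disjoint-union reproducing-kernel structure explicit), whereas the paper chains the same two estimates directly through the hypothesis.
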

\begin{proof}
For every $\lambda\in\Omega$, let $\mathbf{\hat{k}_{\lambda}}$ be the normalized reproducing kernel of ${\mathscr H}(\Omega)$. Then
\begin{align*}
 \left|\left\langle X\mathbf{\hat{k}_{\lambda}},\mathbf{\hat{k}_{\lambda}}\right\rangle\right|&
 =\left|\left\langle\left[\begin{array}{cc} X&0\\0&0 \end{array}\right]\left[\begin{array}{cc} \mathbf{\hat{k}_{\lambda}}\\0 \end{array}\right],\left[\begin{array}{cc} \mathbf{\hat{k}_{\lambda}}\\0 \end{array}\right]\right\rangle\right|
 \\&\leqslant\mathbf{ber}\left(\left[\begin{array}{cc} X&0\\0&0 \end{array}\right]\right)\qquad (\textrm {by the definition of}\,\, \mathbf{ber} )
 \\&\leqslant\mathbf{ber}\left(\left[\begin{array}{cc} Y&0\\0&0 \end{array}\right]\right)\\&\leqslant  \mathbf{ber}\left(Y\right)
 \qquad (\textrm {by inequality} \eqref{max}).
   \end{align*}
   Hence
   \begin{align*}
 \mathbf{ber}\left(X\right)=\underset{\lambda\in
\Omega
}{\sup}\left|\langle X\mathbf{\hat{k}_{\lambda}},\mathbf{\hat{k}_{\lambda}}\rangle\right|
 \leqslant  \mathbf{ber}\left(Y\right).
   \end{align*}
\end{proof}
\begin{corollary}
Let
$A_i, B_i, X_i\in {\mathbb B}({\mathscr H}(\Omega))\,\,(1\leqslant i\leqslant n)$,   $r\geqslant 1$ and $f$, $g$ be nonnegative  continuous  functions on $[0, \infty)$ satisfying the relation $f(t)g(t)=t\,(t\in[0, \infty))$. Then
 \begin{align*}
 \mathbf{ber}^{r}\left(\sum_{i=1}^nA_i^*X_iB_i\right)\leqslant  \mathbf{ber}\left(\frac{1}{p}\left[\sum_{i=1}^nB_i^*f^2(|X_i|)B_i\right]^{\frac{rp}{2}}+\frac{1}{q}\left[\sum_{i=1}^nA_i^*g^2(|X_i^*|)A_i\right]^{\frac{rq}{2}}\right),
   \end{align*}
   where $\frac{1}{p}+\frac{1}{q}=1$ and $pr\geqslant qr\geqslant2$. \\
   In particular, if $\sum_{i=1}^nA^*_iA_i\leqslant I$ and $\sum_{i=1}^nB^*_iB_i\leqslant I$, then
   \begin{align*}
 \mathbf{ber}^{r}\left(\sum_{i=1}^nA_i^*X_iB_i\right)\leqslant  \mathbf{ber}\left(\frac{1}{p}\sum_{i=1}^nB_i^*f^{rp}(|X_i|)B_i+\frac{1}{q}\sum_{i=1}^nA_i^*g^{rq}(|X_i^*|)A_i\right).
   \end{align*}
   \end{corollary}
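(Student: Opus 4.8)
The plan is to deduce both assertions from the single-operator estimates already at hand — Theorem \ref{main3} and inequality \eqref{sheb} — by dilating to a block-operator matrix and then descending back to $\mathscr{H}(\Omega)$ by means of Lemma \ref{man1}. Throughout, put $\mathscr{K}=\bigoplus_{i=1}^{n}\mathscr{H}(\Omega)$, regarded (as in the proof of Theorem \ref{kit}) as a functional Hilbert space over $\Omega\times\cdots\times\Omega$, and write its elements according to the decomposition $\mathscr{K}=\mathscr{H}(\Omega)\oplus\big(\bigoplus_{i=2}^{n}\mathscr{H}(\Omega)\big)$.

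On $\mathscr{K}$ I would consider the operator $\mathbf{A}$ whose $(i,1)$ block-entry is $A_{i}$ for $i=1,\ldots,n$ and whose remaining block-entries are $0$, the analogously defined $\mathbf{B}$, and the block-diagonal operator $\mathbf{X}=\mathrm{diag}(X_{1},\ldots,X_{n})$. Multiplying out, $\mathbf{A}^{*}\mathbf{X}\mathbf{B}=\mathrm{diag}\big(\sum_{i=1}^{n}A_{i}^{*}X_{i}B_{i},\,0\big)$. Since $\mathbf{X}$ is block diagonal, the continuous functional calculus respects the decomposition, so $|\mathbf{X}|=\mathrm{diag}(|X_{1}|,\ldots,|X_{n}|)$, $|\mathbf{X}^{*}|=\mathrm{diag}(|X_{1}^{*}|,\ldots,|X_{n}^{*}|)$, and $f^{2}(|\mathbf{X}|)$, $g^{2}(|\mathbf{X}^{*}|)$ are again block diagonal; hence
\[
\mathbf{B}^{*}f^{2}(|\mathbf{X}|)\mathbf{B}=\mathrm{diag}\left(\sum_{i=1}^{n}B_{i}^{*}f^{2}(|X_{i}|)B_{i},\ 0\right),\qquad \mathbf{A}^{*}g^{2}(|\mathbf{X}^{*}|)\mathbf{A}=\mathrm{diag}\left(\sum_{i=1}^{n}A_{i}^{*}g^{2}(|X_{i}^{*}|)A_{i},\ 0\right).
\]
Finally, for a positive operator $P$ and $s>0$ one has $\mathrm{diag}(P,0)^{s}=\mathrm{diag}(P^{s},0)$, so raising the two operators above to the powers $rp/2$ and $rq/2$ preserves the block form.

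Now I would apply Theorem \ref{main3} on the functional Hilbert space $\mathscr{K}$ to the triple $\mathbf{A},\mathbf{B},\mathbf{X}$ (the hypotheses on $r,p,q,f,g$ being the same as in the corollary), obtaining
\[
\mathbf{ber}^{r}(\mathbf{A}^{*}\mathbf{X}\mathbf{B})\leqslant \mathbf{ber}\!\left(\frac{1}{p}\big[\mathbf{B}^{*}f^{2}(|\mathbf{X}|)\mathbf{B}\big]^{\frac{rp}{2}}+\frac{1}{q}\big[\mathbf{A}^{*}g^{2}(|\mathbf{X}^{*}|)\mathbf{A}\big]^{\frac{rq}{2}}\right).
\]
By the computations above both sides are of the form $\mathbf{ber}\big(\mathrm{diag}(M,0)\big)$ for suitable operators $M$ on $\mathscr{H}(\Omega)$, and $\mathbf{ber}\big(\mathrm{diag}(M,0)\big)=\mathbf{ber}(M)$: the inequality $\mathbf{ber}(M)\leqslant\mathbf{ber}\big(\mathrm{diag}(M,0)\big)$ is the opening step of the proof of Lemma \ref{man1}, and the reverse inequality is \eqref{max}. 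Since the proof of Lemma \ref{man1} is unaffected when the corner summand $\mathscr{H}(\Omega)$ is replaced by the functional Hilbert space $\bigoplus_{i=2}^{n}\mathscr{H}(\Omega)$, this yields the first inequality of the corollary.

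For the ``in particular'' part, note that $\mathbf{A}^{*}\mathbf{A}=\mathrm{diag}\big(\sum_{i=1}^{n}A_{i}^{*}A_{i},\,0\big)$ and similarly for $\mathbf{B}$, so the hypotheses $\sum_{i}A_{i}^{*}A_{i}\leqslant I$ and $\sum_{i}B_{i}^{*}B_{i}\leqslant I$ say precisely that $\mathbf{A}$ and $\mathbf{B}$ are contractions on $\mathscr{K}$; one then repeats the argument with \eqref{sheb} in place of Theorem \ref{main3}, using $\mathbf{B}^{*}f^{rp}(|\mathbf{X}|)\mathbf{B}=\mathrm{diag}\big(\sum_{i}B_{i}^{*}f^{rp}(|X_{i}|)B_{i},\,0\big)$ and its $A$-analogue, and descending again via Lemma \ref{man1}. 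I expect the only point demanding genuine care to be the verification that the continuous functional calculus of the block-diagonal operator $\mathbf{X}$ and the fractional powers of $\mathrm{diag}(P,0)$ act entrywise; everything else is a transcription of Theorem \ref{main3}, \eqref{sheb} and Lemma \ref{man1}. Alternatively one may avoid Lemma \ref{man1} altogether by re-running the proof of Theorem \ref{main3} directly for the column operators $\mathbf{A},\mathbf{B}\in\mathbb{B}(\mathscr{H}(\Omega),\mathscr{K})$ and $\mathbf{X}\in\mathbb{B}(\mathscr{K})$, for which $\mathbf{A}^{*}\mathbf{X}\mathbf{B}=\sum_{i}A_{i}^{*}X_{i}B_{i}$ already acts on $\mathscr{H}(\Omega)$.
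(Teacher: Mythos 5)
Your proposal is correct and follows essentially the same route as the paper: the paper likewise replaces $A,B,X$ by the column operators with entries $A_i,B_i$ and the block-diagonal $\mathrm{diag}(X_1,\ldots,X_n)$, applies Theorem \ref{main3} (and inequality \eqref{sheb} for the contractive case), and descends via Lemma \ref{man1}. Your added remarks on the block-diagonal functional calculus and on Lemma \ref{man1} with a different corner summand are just careful spellings-out of steps the paper leaves implicit.
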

   \begin{proof}
 If we replace $A$, $B$ and $X$ by operator matrices
 \begin{align*}
\left[\begin{array}{cccc}
A_1&0&\cdots&0\\
 A_2&0&\cdots&0\\
\vdots&\vdots&\ddots&\vdots\\
A_n&0&\cdots&0\\
\end{array}\right], \left[\begin{array}{cccc}
B_1&0&\cdots&0\\
B_2&0&\cdots&0\\
\vdots&\vdots&\ddots&\vdots\\
B_n&0&\cdots&0\\
\end{array}\right]\,\, \textrm{and}\,\, \left[\begin{array}{cccc}
X_1&0&\cdots&0\\
0&X_2&\cdots&0\\
\vdots&\vdots&\ddots&\vdots\\
0&0&\cdots&X_n\\
\end{array}\right],
\end{align*}
respectively, in Theorem \ref{main3}, then get
\begin{align*}
 &\mathbf{ber}^{r}\left(\left[\begin{array}{cc}
 \sum_{i=1}^nA_i^*X_iB_i&0\\
 0&0
 \end{array}\right]\right)\\&\qquad\qquad\leqslant  \mathbf{ber}\left(\left[\begin{array}{cc}
 \frac{1}{p}\left[\sum_{i=1}^nB_i^*f^2(|X_i|)B_i\right]^{\frac{rp}{2}}+\frac{1}{q}\left[\sum_{i=1}^nA_i^*g^2(|X_i^*|)A_i\right]^{\frac{rq}{2}}&0\\
 0&0
 \end{array}\right]\right).
\end{align*}
Now, using Lemma \ref{man1} we have
\begin{align*}
 \mathbf{ber}^{r}\left(\sum_{i=1}^nA_i^*X_iB_i\right)\leqslant  \mathbf{ber}\left(\frac{1}{p}\left[\sum_{i=1}^nB_i^*f^2(|X_i|)B_i\right]^{\frac{rp}{2}}
 +\frac{1}{q}\left[\sum_{i=1}^nA_i^*g^2(|X_i^*|)A_i\right]^{\frac{rq}{2}}\right)
   \end{align*}
the first inequality. The second inequality follows from inequality \eqref{sheb} and this complete the proof.
   \end{proof}
   In the next theorem we present an inequality involving the generalized Euclidean Berezin number for off-diagonal operator matrices.
   \begin{theorem}
 Let
 $T_{i}=\left[\begin{array}{cc}
 0&X_{i}\\
 Y_{i}&0
 \end{array}\right]
 \in {\mathbb B}({\mathscr H(\Omega_1)\oplus\mathscr H(\Omega_2)})\,\,(1\leqslant i\leqslant n)$. Then
\begin{align*}
 \mathbf{ber}_{p}^{p}(T_{1}, T_{2},&\ldots,T_{n})\\&\leqslant 2^{p-2}\sum_{i=1}^{n}\mathbf{ber}^\frac{1}{2}\left(f^{2p}(|X_{i}|)+g^{2p}(|Y^*_{i}|)\right)
\mathbf{ber}^\frac{1}{2}\left(f^{2p}(|Y_{i}|)+g^{2p}(|X^*_{i}|)\right)
 \end{align*}
  for  $p\geqslant 1$.
 \end{theorem}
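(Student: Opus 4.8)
The plan is to run the pointwise estimate from the proof of Theorem~\ref{main1} for each index $i$ separately (with the exponent $r$ there replaced by $p$), and only afterwards to sum over $i$ and pass to the supremum. Fix $(\lambda_1,\lambda_2)\in\Omega_1\times\Omega_2$ and let $\mathbf{\hat{k}_{(\lambda_1,\lambda_2)}}$ be the normalized reproducing kernel of $\mathscr H(\Omega_1)\oplus\mathscr H(\Omega_2)$, with components $k_{\lambda_1}\in\mathscr H(\Omega_1)$ and $k_{\lambda_2}\in\mathscr H(\Omega_2)$ subject to $\|k_{\lambda_1}\|^2+\|k_{\lambda_2}\|^2=1$; in particular $\|k_{\lambda_j}\|\leqslant1$. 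Since $\widetilde{T_i}(\lambda_1,\lambda_2)=\langle X_ik_{\lambda_2},k_{\lambda_1}\rangle+\langle Y_ik_{\lambda_1},k_{\lambda_2}\rangle$, the triangle inequality and the convexity of $t\mapsto t^p$ (here $p\geqslant1$) give
$$
\left|\widetilde{T_i}(\lambda_1,\lambda_2)\right|^p\leqslant\frac{2^p}{2}\left(\left|\langle X_ik_{\lambda_2},k_{\lambda_1}\rangle\right|^p+\left|\langle Y_ik_{\lambda_1},k_{\lambda_2}\rangle\right|^p\right).
$$

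Next I would treat each of the two inner products by Lemma~\ref{5}, raised to the power $p/2$, and then invoke Lemma~\ref{3} with exponent $p$, which replaces each $f^2(|X_i|)$ by $f^{2p}(|X_i|)$ and likewise sends $g^2(|X_i^*|)$, $f^2(|Y_i|)$, $g^2(|Y_i^*|)$ to their $p$-th powers---exactly the moves made in the proof of Theorem~\ref{main1}. Combining the two resulting products by a Cauchy--Schwarz inequality in $\mathbb{R}^2$, and then bounding $\langle Pk_{\lambda_j},k_{\lambda_j}\rangle\leqslant\mathbf{ber}(P)\|k_{\lambda_j}\|^2$ for a positive operator $P$, I obtain, for each $i$,
$$
\left|\widetilde{T_i}(\lambda_1,\lambda_2)\right|^p\leqslant\frac{2^p}{2}\,\mathbf{ber}^{\frac12}\!\left(f^{2p}(|X_i|)+g^{2p}(|Y_i^*|)\right)\mathbf{ber}^{\frac12}\!\left(f^{2p}(|Y_i|)+g^{2p}(|X_i^*|)\right)\|k_{\lambda_1}\|\,\|k_{\lambda_2}\|.
$$

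Summing these $n$ inequalities, the common factor $\|k_{\lambda_1}\|\,\|k_{\lambda_2}\|$ pulls out of the sum, and the arithmetic--geometric mean inequality gives $\|k_{\lambda_1}\|\,\|k_{\lambda_2}\|\leqslant\tfrac12\big(\|k_{\lambda_1}\|^2+\|k_{\lambda_2}\|^2\big)=\tfrac12$. Hence
$$
\sum_{i=1}^n\left|\widetilde{T_i}(\lambda_1,\lambda_2)\right|^p\leqslant 2^{p-2}\sum_{i=1}^n\mathbf{ber}^{\frac12}\!\left(f^{2p}(|X_i|)+g^{2p}(|Y_i^*|)\right)\mathbf{ber}^{\frac12}\!\left(f^{2p}(|Y_i|)+g^{2p}(|X_i^*|)\right),
$$
and taking the supremum over $(\lambda_1,\lambda_2)\in\Omega_1\times\Omega_2$, together with the identity $\mathbf{ber}_p^p(T_1,\ldots,T_n)=\sup_{(\lambda_1,\lambda_2)}\sum_{i=1}^n|\widetilde{T_i}(\lambda_1,\lambda_2)|^p$, yields the assertion.

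No essential obstacle is expected here: the argument is just the off-diagonal estimate of Theorem~\ref{main1} applied termwise. The only point requiring a little care is the order of the last steps---one should carry each term as far as the scalar factor $\|k_{\lambda_1}\|\,\|k_{\lambda_2}\|$, then sum, and only afterwards replace $\|k_{\lambda_1}\|\,\|k_{\lambda_2}\|$ by $\tfrac12$, so that the summation does not interfere with the preceding operator estimates.
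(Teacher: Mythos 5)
Your argument is correct and is essentially the paper's own proof: the same chain (triangle inequality, convexity of $t^p$, Lemma \ref{5}, Lemma \ref{3}, Cauchy--Schwarz, the bound by $\mathbf{ber}$, and finally $\|k_{\lambda_1}\|\|k_{\lambda_2}\|\leqslant\frac12$), with the only cosmetic difference that you run it termwise in $i$ and sum at the end, whereas the paper carries the sum through each step. Note only that, like the paper's statement, yours tacitly uses the standing hypothesis that $f,g$ are nonnegative continuous functions with $f(t)g(t)=t$.
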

 \begin{proof}
 For every $(\lambda_1,\lambda_2)\in\Omega_1\times\Omega_2$, let $\mathbf{\hat{k}_{(\lambda_1,\lambda_2)}}=\left[\begin{array}{cc}
 k_{\lambda_1}\\
 k_{\lambda_2}
 \end{array}\right]$ be the normalized reproducing kernel of ${\mathscr H(\Omega_1)\oplus\mathscr H(\Omega_2)}$ (i.e., $\|k_{\lambda_1}\|^2+\|k_{\lambda_2}\|^2=1$). Then
 {\footnotesize\begin{align*}
 &\sum_{i=1}^{n}|\langle T_{i}\mathbf{\hat{k}_{(\lambda_1,\lambda_2)}}, \mathbf{\hat{k}_{(\lambda_1,\lambda_2)}}\rangle|^{p}
 \\&=\sum_{i=1}^{n}|\langle X_{i}k_{\lambda_2}, k_{\lambda_1}\rangle+\langle Y_{i}k_{\lambda_1}, k_{\lambda_2}\rangle|^{p}
  \\&\leqslant \sum_{i=1}^{n}\left(|\langle X_{i}k_{\lambda_2}, k_{\lambda_1}\rangle|+|\langle Y_{i}k_{\lambda_1}, k_{\lambda_2}\rangle|\right)^{p}
  \qquad (\textrm {by the triangular inequality})
 \\&\leqslant\frac{2^p}{2}\sum_{i=1}^{n}|\langle X_{i}k_{\lambda_2}, k_{\lambda_1}\rangle|^{p}+|\langle Y_{i}k_{\lambda_1}, k_{\lambda_2}\rangle|^{p}
 \qquad (\textrm {by the convexity\,} f(t)=t^p)
  \\&\leqslant\frac{2^p}{2}\sum_{i=1}^{n}\langle f^2(|X_{i}|)k_{\lambda_2}, k_{\lambda_2}\rangle^\frac{p}{2}\langle g^2(|X^*_{i}|)k_{\lambda_1}, k_{\lambda_1}\rangle^\frac{p}{2}
 +\langle f^2(|Y_{i}|)k_{\lambda_1}, k_{\lambda_1}\rangle^\frac{p}{2}\langle g^2(|Y^*_{i}|)k_{\lambda_2}, k_{\lambda_2}\rangle^\frac{p}{2}\\&
 \qquad\qquad\qquad\qquad\qquad\qquad\qquad\qquad\qquad\qquad(\textrm {by Lemma\,\,\ref{5}})
\\&\leqslant\frac{2^p}{2}\sum_{i=1}^{n}\langle f^{2p}(|X_{i}|)k_{\lambda_2}, k_{\lambda_2}\rangle^\frac{1}{2}\langle g^{2p}(|X^*_{i}|)k_{\lambda_1}, k_{\lambda_1}\rangle^\frac{1}{2}
+\langle f^{2p}(|Y_{i}|)k_{\lambda_1}, k_{\lambda_1}\rangle^\frac{1}{2}\langle g^{2p}(|Y^*_{i}|)k_{\lambda_2}, k_{\lambda_2}\rangle^\frac{1}{2}
\\&\qquad\qquad\qquad\qquad\qquad\qquad\qquad\qquad\qquad\qquad(\textrm {by Lemma\,\,\ref{3}})
\\&\leqslant\frac{2^p}{2}\sum_{i=1}^{n}\left(\left\langle f^{2p}(|X_{i}|)k_{\lambda_2}, k_{\lambda_2}\rangle+\langle g^{2p}(|Y^*_{i}|)k_{\lambda_2}, k_{\lambda_2}\right\rangle\right)^\frac{1}{2}
\left(\left\langle f^{2p}(|Y_{i}|)k_{\lambda_1}, k_{\lambda_1}\right\rangle+\left\langle g^{2p}(|X^*_{i}|)k_{\lambda_1}, k_{\lambda_1}\right\rangle\right)^\frac{1}{2}\\&
\qquad\qquad\qquad\qquad\qquad\qquad\qquad\qquad\qquad(\textrm {by the Cauchy-Schwarz inequality})
\\&\leqslant\frac{2^p}{2}\sum_{i=1}^{n}\mathbf{ber}^\frac{1}{2}\left(f^{2p}(|X_{i}|)+ g^{2p}(|Y^*_{i}|)\right)\mathbf{ber}^\frac{1}{2}\left(f^{2p}(|Y_{i}|)+g^{2p}(|X^*_{i}|)\right)\|k_{\lambda_1}\|\|k_{\lambda_2}\|
\\&=\frac{2^p}{2}\sum_{i=1}^{n}\mathbf{ber}^\frac{1}{2}\left(f^{2p}(|X_{i}|)+ g^{2p}(|Y^*_{i}|)\right)\mathbf{ber}^\frac{1}{2}\left(f^{2p}(|Y_{i}|)+g^{2p}(|X^*_{i}|)\right)
\left(\frac{\|k_{\lambda_1}\|^2+\|k_{\lambda_2}\|^2}{2}\right)\\&
=\frac{2^p}{4}\sum_{i=1}^{n}\mathbf{ber}^\frac{1}{2}\left(f^{2p}(|X_{i}|)+ g^{2p}(|Y^*_{i}|)\right)\mathbf{ber}^\frac{1}{2}\left(f^{2p}(|Y_{i}|)+g^{2p}(|X^*_{i}|)\right).
  \end{align*}}
  Hence
  \begin{align*}
 \mathbf{ber}_{p}^{p}(T_{1}, T_{2}&,\ldots,T_{n})=\underset{(\lambda_1, \lambda_2) \in
  \Omega_1\times \Omega_2
}{\sup}\sum_{i=1}^{n}|\langle T_{i}\mathbf{\hat{k}_{(\lambda_1,\lambda_2)}}, \mathbf{\hat{k}_{(\lambda_1,\lambda_2)}}\rangle|^{p}\\&\leqslant 2^{p-2}\sum_{i=1}^{n}\mathbf{ber}^\frac{1}{2}\left(f^{2p}(|X_{i}|)+g^{2p}(|Y^*_{i}|)\right)
\mathbf{ber}^\frac{1}{2}\left(f^{2p}(|Y_{i}|)+g^{2p}(|X^*_{i}|)\right)
 \end{align*}
 as required.
 \end{proof}
 \begin{theorem}\label{main4}
\label{th1}Let
 $T_{i}=\left[
\begin{array}{cc}
A_{i} & B_{i} \\
C_{i} & D_{i}%
\end{array}%
\right]\in {\mathbb B}({\mathscr H(}\Omega_1)\oplus{\mathscr H}(\Omega_2)) \,\,(1\leqslant i\leqslant n)$ and $p\geqslant 1$. Then
\begin{align*}
\mathbf{ber}_{p}^{p}&(T_{1},\ldots ,T_{n})\\&\leqslant 2^{-p}\sum_{i=1}^{n}\left(
\mathbf{ber} \left( A_{i}\right) +\mathbf{ber} \left( D_{i}\right) +\sqrt{\left( \mathbf{ber}
\left( A_{i}\right) -\mathbf{ber} \left( D_{i}\right) \right) ^{2}+\left(
\left\Vert B_{i}\right\Vert +\left\Vert C_{i}\right\Vert \right) ^{2}}%
\right) ^{p}.  \label{el2}
\end{align*}%
\end{theorem}
\begin{proof}
For every $(\lambda_1,\lambda_2)\in\Omega_1\times\Omega_2$, let $\mathbf{\hat{k}_{(\lambda_1,\lambda_2)}}=\left[\begin{array}{cc}
 k_{\lambda_1}\\
 k_{\lambda_2}
 \end{array}\right]$ be the normalized reproducing kernel of ${\mathscr H(\Omega_1)\oplus\mathscr H(\Omega_2)}$.
It follows from
\begin{align*}
\left\vert \left\langle T_{i}\mathbf{\hat{k}_{(\lambda_1,\lambda_2)}},\mathbf{\hat{k}_{(\lambda_1,\lambda_2)}}\right\rangle \right\vert & =\left\vert
\left\langle \left[
\begin{array}{cc}
A_{i} & B_{i} \\
C_{i} & D_{i}%
\end{array}%
\right] \left[
\begin{array}{c}
k_{\lambda_1} \\
k_{\lambda_2}%
\end{array}%
\right] ,\left[
\begin{array}{c}
k_{\lambda_1} \\
k_{\lambda_2}%
\end{array}%
\right] \right\rangle \right\vert \\
& =\left\vert \left\langle \left[
\begin{array}{c}
A_{i}k_{\lambda_1}+B_{i}k_{\lambda_2} \\
C_{i}k_{\lambda_1}+D_{i}k_{\lambda_2}%
\end{array}%
\right] ,\left[
\begin{array}{c}
k_{\lambda_1} \\
k_{\lambda_2}%
\end{array}%
\right] \right\rangle \right\vert \\
& =\left\vert \left\langle A_{i}k_{\lambda_1},k_{\lambda_1}\right\rangle +\left\langle
B_{i}k_{\lambda_2},k_{\lambda_1}\right\rangle +\left\langle C_{i}k_{\lambda_1},k_{\lambda_2}\right\rangle +\left\langle
D_{i}k_{\lambda_2},k_{\lambda_2}\right\rangle \right\vert \\
& \leqslant\left\vert \left\langle A_{i}k_{\lambda_1},k_{\lambda_1}\right\rangle \right\vert +\left\vert
\left\langle B_{i}k_{\lambda_2},k_{\lambda_1}\right\rangle \right\vert +\left\vert \left\langle
C_{i}k_{\lambda_1},k_{\lambda_2}\right\rangle \right\vert +\left\vert \left\langle
D_{i}k_{\lambda_2},k_{\lambda_2}\right\rangle \right\vert
\end{align*}%
that
{\footnotesize\begin{align*}
&\mathbf{ber}_{p}^{p}(T_{1},\ldots ,T_{n})\\& =\sup_{(\lambda_1,\lambda_2)\in\Omega_1\times\Omega_2}\sum_{i=1}^{n}\left\vert \left\langle T_{i}\mathbf{\hat{k}_{(\lambda_1,\lambda_2)}},\mathbf{\hat{k}_{(\lambda_1,\lambda_2)}}\right\rangle \right\vert
^{p} \\
& \leqslant \sup_{(\lambda_1,\lambda_2)\in\Omega_1\times\Omega_2}\sum_{i=1}^{n}\left(
\left\vert \left\langle A_{i}k_{\lambda_1},k_{\lambda_1}\right\rangle \right\vert +\left\vert
\left\langle B_{i}k_{\lambda_2},k_{\lambda_1}\right\rangle \right\vert +\left\vert \left\langle
C_{i}k_{\lambda_1},k_{\lambda_2}\right\rangle \right\vert +\left\vert \left\langle
D_{i}k_{\lambda_2},k_{\lambda_2}\right\rangle \right\vert \right) ^{p} \\
& \leqslant \sum_{i=1}^{n}\left( \sup_{(\lambda_1,\lambda_2)\in\Omega_1\times\Omega_2}
\left( \left\vert \left\langle A_{i}k_{\lambda_1},k_{\lambda_1}\right\rangle \right\vert
+\left\vert \left\langle B_{i}k_{\lambda_2},k_{\lambda_1}\right\rangle \right\vert +\left\vert
\left\langle C_{i}k_{\lambda_1},k_{\lambda_2}\right\rangle \right\vert +\left\vert \left\langle
D_{i}k_{\lambda_2},k_{\lambda_2}\right\rangle \right\vert \right) \right) ^{p} \\
& \leqslant\sum_{i=1}^{n}\left( \sup_{(\lambda_1,\lambda_2)\in\Omega_1\times\Omega_2}
\left( \mathbf{ber} \left( A_{i}\right) \left\Vert k_{\lambda_1}\right\Vert ^{2}+\mathbf{ber}
\left( D_{i}\right) \left\Vert k_{\lambda_2}\right\Vert ^{2}+\left( \left\Vert
B_{i}\right\Vert +\left\Vert C_{i}\right\Vert \right) \left\Vert
k_{\lambda_1}\right\Vert \left\Vert k_{\lambda_2}\right\Vert \right) \right) ^{p} \\
&  \leqslant \sum_{i=1}^{n}\left( \sup_{\theta \in \left[ 0,2\pi \right] }\left(
\mathbf{ber} \left( A_{i}\right) \cos^2 \theta +\mathbf{ber} \left( D_{i}\right) \sin^2
\theta +\left( \left\Vert B_{i}\right\Vert +\left\Vert C_{i}\right\Vert
\right) \cos \theta \sin \theta \right) \right) ^{p} \\
&=2^{-p}\sum_{i=1}^{n}\left( \mathbf{ber} \left( A_{i}\right) +\mathbf{ber} \left(
D_{i}\right) +\sqrt{\left( \mathbf{ber} \left( A_{i}\right) -\mathbf{ber} \left(
D_{i}\right) \right) ^{2}+\left( \left\Vert B_{i}\right\Vert +\left\Vert
C_{i}\right\Vert \right) ^{2}}\right) ^{p}.
\end{align*}}%
This completes the proof.
\end{proof}

\textbf{Acknowledgement.} The author would like to thank the Tusi Mathematical Research Group (TMRG).
\bigskip

\end{document}